\documentclass{amsart}

\usepackage[active]{srcltx}

\usepackage{amsmath,amssymb,amsthm,amscd}
\newtheorem{theorem}{Theorem}[section]

\newtheorem{proposition}[theorem]{Proposition}
\newtheorem{corollary}[theorem]{Corollary}

\begin{document}

\title[when cotorsion modules are pure-injective]{Commutative rings whose cotorsion modules are pure-injective}
\author{Fran\c{c}ois Couchot}

\address{Universit\'e de Caen Basse-Normandie, CNRS UMR
  6139 LMNO,
F-14032 Caen, France}
\email{francois.couchot@unicaen.fr}  

\keywords{cotorsion module, pure-injective module, essential extension, perfect ring, pure-semisimple ring}

\subjclass[2010]{13C11, 16D40, 16D80}

\begin{abstract}
Let $R$ be a ring (not necessarily commutative). A left $R$-module is said to be {\it cotorsion} if $\mathrm{Ext}_R^1(G,M)=0$ for any flat $R$-module $G$. It is well known that each pure-injective left $R$-module is cotorsion, but the converse does not hold: for instance, if $R$ is left perfect but not left pure-semisimple then, each left $R$-module is cotorsion but there exist non-pure-injective left modules. The aim of this paper is to describe the class $\mathcal{C}$ of commutative rings $R$ for which each cotorsion $R$-module is pure-injective. It is easy to see that $\mathcal{C}$ contains the class of von Neumann regular rings and the one of pure-semisimple rings. We  prove that $\mathcal{C}$ is strictly contained in the class of locally pure-semisimple rings. We  state that a commutative ring $R$ belongs to $\mathcal{C}$ if and only if $R$ verifies one of the following conditions:
\begin{enumerate}
\item $R$ is coherent and each pure-essential extension of $R$-modules is essential;
\item $R$ is coherent and each RD-essential extension of $R$-modules is essential;
\item any $R$-module $M$ is pure-injective if and only if $\mathrm{Ext}_R^1(R/A,M)=0$ for each pure ideal $A$ of $R$ (Baer's criterion).
\end{enumerate}
\end{abstract}

\maketitle

\section{Introduction and preliminaries}

The aim of this study is to give a complete description of commutative rings for which each cotorsion module is pure-injective. In this first section we recall some definitions and some former results. Then, in section \ref{S:gc}, we enunciate and show some partial results which are available even if the ring is not commutative.  
Section \ref{S:cc} is devoted to the commutative case. We get our main result (Theorem \ref{T:main}) by using localizations and local rings. In the last section we show that a commutative ring $R$ is locally perfect if and only if any $R$-module $M$ for which $\mathrm{Ext}^1_R(C,M)=0$ for each cyclic flat module $C$ is cotorsion, and we investigate the following question: give a characterization of rings $R$ for which each flat-essential exension of $R$-modules is essential. Throughout this paper other related questions are studied, where we use the following notions: Warfield cotorsion module, RD-injective module, RD-essential extension and so on... 

Even in the commutative case some questions are open. For instance, the condition "each cotorsion module is pure-injective" implies the condition " each Warfield cotorsion module is RD-injective", but the converse is not proven. On the other hand, we do not know if there exist non-coherent commutative rings for which each pure-essential extension of modules is essential. Also, it should be interesting to study strongly perfect rings which are introduced in the last section.

We shall assume that all rings are associative with identity and
all modules are unitary.
Given a ring $R$, any left module $M$ is said to be {\bf P-flat} (resp. {\bf P-injective}) if $\mathrm{Tor}^R_1(R/rR,M)=0$ (resp. $\mathrm{Ext}_R^1(R/Rr,M)=0$) for each $r\in R$. We say that $R$ is {\bf left P-coherent} if each principal left ideal of $R$ is finitely presented.

A left module $M$ is {\bf FP-injective} if $\mathrm{Ext}^1_R(F,M)=0$ for each finitely presented left module $F$.

Any left module $M$ is called {\bf cotorsion} (resp. {\bf Warfield cotorsion}) if, for each flat (resp. P-flat) left module $F$, $\mathrm{Ext}^1_R(F,M)=0$.

A short exact sequence of left modules is {\bf pure} (resp. {\bf RD-pure}) if it remains exact when tensoring it by any right module (resp. module of the form $R/rR$, $r\in R$). A left module is {\bf pure-injective} (resp. {\bf RD-injective}) if it is injective relatively to each pure (resp. RD-pure) exact sequence of left modules.

$R$ is said to be left {\bf pure-semisimple} (resp. {\bf RD-semisimple}) if each left $R$-module is pure-injective (resp. RD-injective). When $R$ is commutative then $R$ is pure-semisimple if and only if it is RD-semisimple if and only if it is an Artinian ring whose all ideals are principal (\cite[Theorem 4.3]{Gri70}).

An $R$-module $B$
is a \textbf{pure-essential extension} (resp. {\bf RD-essential extension}) of a submodule $A$ if $A$ is a
pure (resp. RD) submodule of $B$ and, if for each
submodule $K$ of $B$, either $K\cap A\ne 0$ or $(A+K)/K$ is not a pure (resp. RD)
submodule of $B/K$. 
We say that $B$ is a \textbf{pure-injective hull} (resp. {\bf RD-injective hull})
of $A$ if $B$ is pure-injective (resp. RD-injective) and a pure-essential (resp. RD-essential) extension of $A$.

 Each $R$-module $M$ has a pure-injective hull and an RD-injective hull (\cite[Proposition 6]{War69}).

A left module $B$ is a {\bf flat extension} (resp. {\bf P-flat extension}) of a submodule $A$ if $B/A$ is flat (resp. P-flat). Moreover, if there are no submodules $S$ of $B$ with $S\cap A=0$ and $B/S$ flat (resp. P-flat) extension of $A$, then $B$ is a {\bf flat essential extension} (resp. {\bf P-flat essential extension}) of $A$. If $B$ is cotorsion (resp. Warfield cotorsion) and a flat (resp. P-flat) essential extension of a submodule $A$ then we say that $B$ is a {\bf cotorsion} (resp. {\bf Warfield cotorsion}) {\bf envelope} of $A$ (by \cite[Theorem 3.4.5]{Xu96} these definitions are equivalent to the usual ones). 

Each left module $M$ has a cotorsion (resp. Warfield cotorsion) envelope (\cite[Theorem 6]{BiEBEn01} and \cite[Theorem 3.4.6]{Xu96}).

For each left module $M$ we denote by $\mathcal{E}(M)$ its cotorsion envelope, $\mathcal{E}_W(M)$ its Warfield cotorsion envelope, $\mathrm{PE}(M)$ its pure-injective hull and $\mathrm{RDE}(M)$ its RD-injective hull. 

Each pure(RD)-injective module is (Warfield) cotorsion, but \cite[Example p.75]{Xu96} shows that the converse does not hold.

\begin{theorem}\cite[Theorem 3.5.1]{Xu96}\label{T:Xu}
For any ring $R$ the following are equivalent:
\begin{enumerate}
\item for any exact sequence of left modules $0\rightarrow G'\rightarrow G\rightarrow G''\rightarrow 0$ with $G'$ and $G''$ pure-injective, $G$ is also pure-injective;
\item for any left module $M$, $\mathrm{PE}(M)/M$ is flat;
\item every cotorsion left module is pure-injective.
\end{enumerate}
Moreover if $R$ is right coherent, then the above are equivalent to the following:
\begin{enumerate}
\item[(4)] for any exact sequence of left modules $0\rightarrow G'\rightarrow G\rightarrow G''\rightarrow 0$ with $G'$ and $G$ pure-injective, $G''$ is also pure-injective.
\end{enumerate}
\end{theorem}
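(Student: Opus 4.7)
My plan is to establish the cycle $(3)\Rightarrow(1)\Rightarrow(2)\Rightarrow(3)$ and then handle $(1)\Leftrightarrow(4)$ under right coherence via character duality. For $(3)\Rightarrow(1)$ I use that every pure-injective module is cotorsion, so for any flat $F$ the long exact sequence for $\mathrm{Ext}^1(F,-)$ applied to $0\to G'\to G\to G''\to 0$ has vanishing outer terms; hence $\mathrm{Ext}^1(F,G)=0$, $G$ is cotorsion, and $(3)$ forces $G$ to be pure-injective. For $(2)\Rightarrow(3)$, given a cotorsion $M$, condition $(2)$ makes $\mathrm{PE}(M)/M$ flat, so $\mathrm{Ext}^1(\mathrm{PE}(M)/M,M)=0$; the canonical sequence $0\to M\to\mathrm{PE}(M)\to\mathrm{PE}(M)/M\to 0$ splits and $M$ is a direct summand of the pure-injective $\mathrm{PE}(M)$.

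The hard part is $(1)\Rightarrow(2)$. Fix $M$, set $F=\mathrm{PE}(M)/M$, and choose any exact sequence $0\to K\to P\to F\to 0$ with $P$ projective; I aim to show that $K$ is pure in $P$, equivalently that $F$ is flat. First, push out $K\hookrightarrow P$ along $K\hookrightarrow\mathrm{PE}(K)$ to get $Y$ in $0\to\mathrm{PE}(K)\to Y\xrightarrow{\pi}F\to 0$. Then pull back the quotient $q\colon\mathrm{PE}(M)\to F$ along $\pi$ to obtain $W=\mathrm{PE}(M)\times_F Y$, producing two short exact sequences
\[
0\to\mathrm{PE}(K)\to W\to\mathrm{PE}(M)\to 0\qquad\text{and}\qquad 0\to M\to W\to Y\to 0.
\]
Condition $(1)$ applied to the first forces $W$ to be pure-injective. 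Since $M\hookrightarrow\mathrm{PE}(M)$ is pure and $W$ is pure-injective, the inclusion $M\hookrightarrow W$ extends to a map $\phi\colon\mathrm{PE}(M)\to W$. Writing $\phi=(\beta,\gamma)$ via the pullback projections, $\beta\in\mathrm{End}(\mathrm{PE}(M))$ restricts to $\mathrm{id}_M$, so by the rigidity of the pure-injective hull it is an automorphism; after replacing $\phi$ by $\phi\circ\beta^{-1}$ I may assume $\beta=\mathrm{id}$. Then $\gamma\colon\mathrm{PE}(M)\to Y$ lifts $q$ through $\pi$ and vanishes on $M$, so it factors as $\gamma'\circ q$ with $\gamma'\colon F\to Y$ a section of $\pi$. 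The corresponding retraction $\rho\colon Y\to\mathrm{PE}(K)$ restricts on the image of $K$ in $Y$ to the canonical inclusion $K\hookrightarrow\mathrm{PE}(K)$, so the composite $P\hookrightarrow Y\xrightarrow{\rho}\mathrm{PE}(K)$ extends the inclusion $K\hookrightarrow\mathrm{PE}(K)$; tensoring and using purity of $K$ in $\mathrm{PE}(K)$ now yields purity of $K$ in $P$.

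For $(1)\Leftrightarrow(4)$ under right coherence, the character functor $(-)^+=\mathrm{Hom}_{\mathbb{Z}}(-,\mathbb{Q}/\mathbb{Z})$ is exact and interchanges the outer positions in a short exact sequence. Right coherence is precisely what allows pure-injectivity on the left $R$-side to be read off from a usable property of the character, so closure of the pure-injective class under extensions translates into closure under quotients by pure-injective submodules, and vice versa. The main obstacle is the combined pushout-and-pullback construction in $(1)\Rightarrow(2)$, together with the rigidity of the pure-injective hull that upgrades $\phi$ into a splitting; once that splitting is secured, purity of $K$ in $P$ falls out from the tensor-product criterion.
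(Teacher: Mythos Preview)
The paper does not prove this theorem; it is quoted from \cite[Theorem~3.5.1]{Xu96} and used as a black box. There is thus no in-paper argument to compare your proposal against, and it can only be assessed on its own merits.

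Your cycle $(3)\Rightarrow(1)\Rightarrow(2)\Rightarrow(3)$ is correct and is essentially the standard argument. The pushout--pullback construction in $(1)\Rightarrow(2)$ is carried out cleanly: the pure-injectivity of $W$ via $(1)$, the automorphism property of $\beta$ from the minimality of the pure-injective hull, the induced section $\gamma'$ of $\pi$, and the transfer of purity from $K\subseteq\mathrm{PE}(K)$ back to $K\subseteq P$ are all valid.

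Your treatment of $(1)\Leftrightarrow(4)$, however, is not a proof but a gesture. You invoke the character functor and assert that ``right coherence is precisely what allows pure-injectivity on the left $R$-side to be read off from a usable property of the character,'' but you never name that property or exhibit the mechanism. Two concrete obstructions: first, $(-)^+$ sends \emph{every} module to a pure-injective one, so after a single application the hypothesis ``$G'$ and $G$ are pure-injective'' versus ``$G'$ and $G''$ are pure-injective'' becomes invisible; second, $(-)^+$ carries left modules to right modules, so dualising the sequence does not ``interchange the outer positions'' within the category of left modules where the statement lives. One direction, $(3)\Rightarrow(4)$, can in fact be obtained without duality by observing that the first syzygy of a flat module is again flat, so $\mathrm{Ext}^2_R(F,G')=0$ for flat $F$ and cotorsion $G'$, whence $G''$ is cotorsion and then pure-injective by $(3)$. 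The genuine role of right coherence, and the argument for $(4)\Rightarrow(1)$, still need to be written out explicitly.
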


By a similar way the following can be proven.

\begin{theorem}
For any ring $R$ the following are equivalent:
\begin{enumerate}
\item for any exact sequence of left modules $0\rightarrow G'\rightarrow G\rightarrow G''\rightarrow 0$ with $G'$ and $G''$ RD-injective, $G$ is also RD-injective;
\item for any left module $M$, $\mathrm{RDE}(M)/M$ is P-flat;
\item every Warfield cotorsion left module is RD-injective.
\end{enumerate}
Moreover if $R$ is right P-coherent, then the above are equivalent to the following:
\begin{enumerate}
\item[(4)] for any exact sequence of left modules $0\rightarrow G'\rightarrow G\rightarrow G''\rightarrow 0$ with $G'$ and $G$ RD-injective, $G''$ is also RD-injective.
\end{enumerate}
\end{theorem}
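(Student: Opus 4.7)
The plan is to parallel the proof of Theorem~\ref{T:Xu} line by line, replacing \emph{pure}, \emph{flat}, \emph{cotorsion} and $\mathrm{PE}$ by \emph{RD-pure}, \emph{P-flat}, \emph{Warfield cotorsion} and $\mathrm{RDE}$ respectively, and, in the last part, replacing \emph{right coherent} by \emph{right P-coherent}. I would close the cycle $(3)\Rightarrow(1)\Rightarrow(2)\Rightarrow(3)$ and then handle the P-coherent addendum.

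For $(3)\Rightarrow(1)$: every RD-injective module is Warfield cotorsion, because any extension $0\to N\to X\to F\to 0$ with $F$ P-flat is automatically RD-pure (the obstruction $\mathrm{Tor}_1^R(R/rR,F)$ vanishes) and therefore splits when $N$ is RD-injective. Thus the long exact Ext-sequence for $0\to G'\to G\to G''\to 0$ tested against any P-flat $F$ forces $G$ Warfield cotorsion, and (3) upgrades this to RD-injectivity. For $(2)\Rightarrow(3)$: given $M$ Warfield cotorsion, the sequence $0\to M\to\mathrm{RDE}(M)\to\mathrm{RDE}(M)/M\to 0$ has P-flat cokernel by (2), so $\mathrm{Ext}^1_R(\mathrm{RDE}(M)/M,M)=0$, it splits, and $M$ sits as a direct summand of the RD-injective $\mathrm{RDE}(M)$.

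The main obstacle is $(1)\Rightarrow(2)$, which is already the heart of Xu's argument. Following that model, I would set $L:=\mathrm{RDE}(M)/M$ and embed $L$ as an RD-pure submodule of an RD-injective module $E$ (for instance $E:=\mathrm{RDE}(L)$), then splice $0\to M\to\mathrm{RDE}(M)\to L\to 0$ with $0\to L\to E\to E/L\to 0$ into a short exact sequence $0\to M\to G\to E\to 0$ in which $\mathrm{RDE}(M)$ sits RD-purely as a submodule with quotient $E/L$. Invoking (1) on the appropriate subquotients of this diagram forces $G$ to be RD-injective, so that the RD-essential extension $\mathrm{RDE}(M)\subseteq G$ splits; RD-essentiality then forces $G=\mathrm{RDE}(M)$ and hence $E=L$. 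Finally, tensoring $0\to M\to\mathrm{RDE}(M)\to L\to 0$ with $R/rR$ and using the RD-purity of $M$ in $\mathrm{RDE}(M)$ yields $\mathrm{Tor}_1^R(R/rR,L)=0$ for every $r\in R$, i.e., P-flatness of $L$.

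The equivalence $(1)\Leftrightarrow(4)$ under right P-coherence then mirrors the corresponding portion of Theorem~\ref{T:Xu}: right P-coherence ensures that the class of P-flat right modules is closed under direct products, making the character-module functor sufficiently compatible with the relevant $\mathrm{Ext}/\mathrm{Tor}$ computations to trade extension-stability of middle terms against extension-stability of cokernels.
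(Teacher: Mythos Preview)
Your overall plan---replace pure/flat/cotorsion/coherent by RD-pure/P-flat/Warfield cotorsion/P-coherent throughout Xu's argument---is exactly what the paper does; indeed the paper offers no more than the sentence ``By a similar way the following can be proven.'' Your sketches of $(3)\Rightarrow(1)$ and $(2)\Rightarrow(3)$ are correct.

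The implication $(1)\Rightarrow(2)$, however, is not handled correctly in your sketch, and this is precisely the place where the analogy requires care. Two concrete problems:

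\begin{itemize}
\item The ``splice'' producing $0\to M\to G\to E\to 0$ with $\mathrm{RDE}(M)\subset G$ and $G/\mathrm{RDE}(M)\cong E/L$ is not an automatic construction. What you are asking for is a lift of the class $[\,0\to M\to\mathrm{RDE}(M)\to L\to 0\,]\in\mathrm{Ext}^1_R(L,M)$ along the restriction map $\mathrm{Ext}^1_R(E,M)\to\mathrm{Ext}^1_R(L,M)$, and the obstruction to such a lift lives in $\mathrm{Ext}^2_R(E/L,M)$; you have not shown it vanishes. Even granting the existence of $G$, it is unclear which ``appropriate subquotients'' make $(1)$ applicable: in $0\to\mathrm{RDE}(M)\to G\to E/L\to 0$ the quotient $E/L=\mathrm{RDE}(L)/L$ is not known to be RD-injective, and in $0\to M\to G\to E\to 0$ the kernel $M$ is not known to be RD-injective.
\item Your final sentence is false as stated. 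RD-purity of $0\to M\to\mathrm{RDE}(M)\to L\to 0$ only yields that $\mathrm{Tor}_1^R(R/rR,\mathrm{RDE}(M))\to\mathrm{Tor}_1^R(R/rR,L)$ is surjective, not that the target vanishes. Were your claim true, $(2)$ would hold over \emph{every} ring with no hypothesis at all, since $M$ is always RD-pure in $\mathrm{RDE}(M)$.
\end{itemize}

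In Xu's actual argument the passage from $(1)$ uses the (Warfield) cotorsion envelope $\mathcal{E}_W(M)$, whose cokernel is P-flat by construction, and then identifies $\mathcal{E}_W(M)$ with $\mathrm{RDE}(M)$; you should revisit that step rather than trying to manufacture P-flatness directly from RD-purity.
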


The following proposition is well known. For convenience, a proof is given. We set  $0_P$ to be the kernel of the natural map $R\rightarrow R_P$ where $P\in\mathrm{Spec}\ R$.

\begin{proposition}
\label{P:pireg}
Let $R$ be a commutative ring. We assume that each prime ideal is maximal. Then:
\begin{enumerate}
\item for any closed subset $C$ of $\mathrm{Spec}\ R$, $C=V(A)$ where $A=\cap_{P\in C}0_P$ is a pure ideal;
\item for each maximal ideal $P$, $R_P=R/0_P$;
\item each pure ideal of $R$ is generated by idempotents.
\end{enumerate}
\end{proposition}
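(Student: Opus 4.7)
The common thread in all three statements is the classical characterization of zero-dimensional commutative rings: $R$ has all primes maximal if and only if for every $r\in R$ there exist $n\geq 1$ and $t\in R$ with $r^n(1-rt)=0$. I would use this identity as the main tool throughout, handling (2) and (3) first, and then bootstrapping to (1).

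For (2), applying the identity to $s\notin P$ yields $s^n\notin P$ (since $P$ is prime) together with $s^n(1-st)=0$, so $1-st\in 0_P$; thus $s$ becomes invertible in $R/0_P$. Since $0_P=\ker(R\to R_P)$ by definition, the induced map $R/0_P\hookrightarrow R_P$ is injective, and as every element of $R\setminus P$ is already a unit in $R/0_P$, no further localization is needed. For (3), let $A$ be pure and $a\in A$; purity (flatness of $R/A$) supplies $b\in A$ with $a=ab$. Applying the identity to $b$ gives $n,u$ with $b^n(1-bu)=0$, and $e:=b^nu^n$ is idempotent (since $b^n=b^n\cdot bu$ forces $b^{2n}u^n=b^n$, whence $e^2=e$) and lies in $A$. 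Iterating $a=ab$ yields $ab^n=a$; multiplying $b^n(1-bu)=0$ by $a$ then gives $a=au$, hence $a=au^n$ and $ae=a$. Thus $A$ is generated by its idempotents.

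For (1), the inclusion $C\subseteq V(A)$ is immediate from $A\subseteq 0_P\subseteq P$. Conversely, given $P\notin C$, write $C=V(I)$, choose $i\in I\setminus P$, and apply the identity to produce an idempotent $e:=i^nt^n\in I$. Since $I\subseteq Q$ for every $Q\in C$, we have $e\in Q$; combined with $e(1-e)=0$ and $1-e\notin Q$ this forces $e\in 0_Q$, so $e\in A$. Reducing the identity modulo $P$ forces $\overline{e}=1$, so $e\notin P$, which gives $V(A)=C$.

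The main obstacle is purity of $A$: the local data ``$a\in 0_Q$ for every $Q\in C$'' has to be assembled into a single witness $b\in A$ with $a=ab$. I would fix $a\in A$, set $J=\mathrm{Ann}(a)$, and observe that $V(J)\cap C=\emptyset$. For every $P\in V(J)$, since $P\notin C$, the construction of the previous paragraph produces an idempotent $e_P\in A$ with $P\in D(e_P)$; quasi-compactness of $V(J)$ in $\mathrm{Spec}\,R$ yields finitely many $e_1,\dots,e_k\in A$ whose distinguished opens cover $V(J)$, and $e:=1-\prod_{j=1}^{k}(1-e_j)\in A$ satisfies $D(e)=\bigcup_j D(e_j)\supseteq V(J)$. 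Hence $J+eR=R$: writing $1=x+er$ with $x\in J$ gives $a=ax+aer=a(er)$ with $er\in A$, the required witness to purity.
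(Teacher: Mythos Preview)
Your argument is correct, but the route is quite different from the paper's. The paper proceeds in the order $(1)\Rightarrow(2)\Rightarrow(3)$ and argues topologically: for $(1)$ it shows, via nilpotency in each $R_P$ and quasi-compactness of $C$, that some power of every element of $\sqrt{A}$ lies in $A$, and then obtains purity in one line by checking stalks ($A_P=0$ for $P\in V(A)$, $A_P=R_P$ otherwise). Part $(2)$ is then the special case $C=\{P\}$, and $(3)$ is deduced from $(1)$ together with the fact that $R/N$ is von Neumann regular, which yields a basis of clopen sets and hence idempotents $e_a$ with $D(e_a)=D(a)$.

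By contrast you work purely with the elementwise identity $r^n(1-rt)=0$, prove $(2)$ and $(3)$ directly, and only then assemble $(1)$, including an explicit Zorn-free witness to purity via a finite idempotent cover of $V(\mathrm{Ann}(a))$. Your proof of $(3)$ is self-contained (it does not need $(1)$), and your treatment of $(1)$ produces concrete idempotents in $A$ separating $P\notin C$ from $C$; the price is that your purity argument is longer than the paper's one-line stalk check. Conversely, the paper's approach is more conceptual and makes the logical dependence $(1)\to(2),(3)$ transparent, but is less explicit about where the idempotents actually come from. Both proofs ultimately use quasi-compactness of a closed subset of $\mathrm{Spec}\,R$ exactly once.
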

\begin{proof}
$(1)$. Let $C=V(B)$ where $B=\cap_{L\in C}L$. We put $A=\cap_{P\in C}0_P$. Let $b\in B$ and $P\in C$. The image of $b$, by the natural map $R\rightarrow R_P$, belongs to the nilradical of $R_P$. It follows that there exist  $0\ne n_P\in\mathbb{N}$ and $s_P\in R\setminus P$ such that $s_Pb^{n_P}=0$. Hence, $\forall L\in D(s_P)\cap C,\ b^{n_P}\in 0_L$. A finite family $(D(s_{P_j}))_{1\leq j\leq m}$ covers $C$. Let $n=\max\{n_{P_1},\dots,n_{P_m}\}$. Then $b^n\in 0_L,\ \forall L\in C$, whence $b^n\in A$. We deduce that $C=V(A)$. Now, we have $A_P=0$ if $P\in V(A)$ and $A_P=R_P$ if $P\in D(A)$. Hence $A$ is a pure ideal.

$(2)$ is a consequence of $(1)$ by taking $C=\{P\}$.

$(3)$. We know that $\mathrm{Spec}\ R$ is homeomorphic to $\mathrm{Spec}\ R/N$ where $N$ is the nilradical of $R$. Since $R/N$ is von Neumann regular its principal ideals are generated by idempotents. So, $\mathrm{Spec}\ R$ has a base of clopen subsets (closed and open). Whence if $A$ is a pure ideal then, for any $a\in A$ there exists an idempotent $e_a$ such that $D(e_a)=D(a)\subseteq D(A)$. Clearly $D(A)=D(\Sigma_{a\in A}Re_a)$. Since $\Sigma_{a\in A}Re_a$ is a pure ideal, by $(1)$ we conclude that $A=\Sigma_{a\in A}Re_a$.
\end{proof}

\section{when cotorsion modules are pure-injective: general case}
\label{S:gc}
A left module $M$ over a ring $R$ is called {\bf regular} (respectively {\bf RD-regular}) if all its submodules are pure (respectively RD-pure).

\begin{proposition}\label{P:reg}
\label{P:regular} Let $R$ be a ring, $J$ its Jacobson radical. Let $M$ be an RD-regular left $R$-module. Then $JM=0$ and,  if in addition $R$ is semilocal, $M$ is semisimple.
\end{proposition}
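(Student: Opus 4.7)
The plan is to exploit directly the defining condition of RD-regularity: for every submodule $N\subseteq M$ and every $r\in R$ one has $N\cap rM=rN$. The key move is to apply this with $r=j\in J$ and with $N$ the cyclic submodule generated by $jm$, for an arbitrary $m\in M$; the defining property of the Jacobson radical (that $1-a$ is a unit for every $a\in J$) should then force $jm=0$.

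In detail, I would fix $j\in J$ and $m\in M$ and set $N=R(jm)\subseteq M$. Obviously $jm\in N$, and also $jm=j\cdot m\in jM$, so $jm\in N\cap jM$. By the RD-purity of $N$ in $M$, this intersection equals $jN$, so there exists $n\in N$ with $jm=jn$. Writing $n=s(jm)$ for some $s\in R$ and using left-module associativity, one obtains
\[
jm = j\bigl(s(jm)\bigr) = (js)(jm),
\]
hence $(1-js)(jm)=0$. Since $J$ is a two-sided ideal, $js\in J$, and therefore $1-js$ is a unit in $R$; multiplying by its inverse yields $jm=0$. This gives $JM=0$.

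For the second statement, assume in addition that $R$ is semilocal, so that $R/J$ is a semisimple Artinian ring. Since $JM=0$, the $R$-action on $M$ factors through $R/J$, and the $R$-submodules of $M$ coincide with the $R/J$-submodules. Every module over a semisimple Artinian ring is semisimple, so $M$ decomposes as a direct sum of simple $R/J$-modules; those are the same as the simple $R$-modules annihilated by $J$, so $M$ is semisimple as an $R$-module.

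The only mildly delicate point is choosing the right test submodule in the first step: once $N=R(jm)$ is chosen, the combination of RD-purity ($jm\in jN$) with the Jacobson-radical property is essentially forced. The second half is then routine module theory over the semisimple Artinian quotient $R/J$.
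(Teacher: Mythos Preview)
Your argument is correct and essentially identical to the paper's: both pick the cyclic submodule $R(jm)$ (the paper frames it as $Rax\subseteq Rx$), use RD-purity with $r=j$ to write $jm=(js)(jm)$, and then invoke that $1-js$ is a unit. You additionally spell out the semilocal conclusion via $R/J$ being semisimple Artinian, which the paper leaves implicit.
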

\begin{proof}
 If $0\ne x\in M$ then $Rax$ is an RD-submodule of $Rx$ for each $a\in R$. So, for each $a\in J$, there exists $b\in R$ such that $ax=abax$. It follows that $(1-ab)ax=0$, and from $a\in J$ we successively deduce that $(1-ab)$ is a unit and $ax=0$.
\end{proof}

\begin{proposition}\label{P:noabelien}
Let $R$ be a ring. Assume there exists a family $\mathfrak{E}$ of orthogonal central idempotents of $R$ satisfying the following conditions:
\begin{itemize}
\item[(a)] $R/R(1-e)$ is a left pure-semisimple ring , for each $e\in\mathfrak{E}$;
\item[(b)] $R/A$ is a von Neumann regular ring where $A=\oplus_{e\in\mathfrak{E}}Re$.
\end{itemize}
Then:
\begin{enumerate}
\item each cotorsion left $R$-module is pure-injective;
\item  each pure-essential extension of left $R$-modules is essential;
\item any left $R$-module $M$ is pure-injective if and only if $\mathrm{Ext}_R^1(C,M)=0$ for each cyclic flat left $R$-module $C$;
\item for any left $R$-module $M$, $\mathrm{PE}(M)/M$ is flat, FP-injective and regular.
\end{enumerate}
\end{proposition}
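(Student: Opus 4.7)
The overall plan is to first establish (4), deduce (1) immediately from it via Theorem~\ref{T:Xu}, and then use the structural description of $\mathrm{PE}(M)/M$ afforded by (4) to obtain (2) and (3).

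\textbf{Setup and the key reduction.} First observe that $A=\oplus_{e\in\mathfrak{E}}Re$ is the filtered union of the summand ideals $R(\sum_{e\in F}e)$ for finite $F\subseteq\mathfrak{E}$, so $A$ is a pure ideal and $R/A$ is a flat $R$-module. For each $e\in\mathfrak{E}$, $Re\cong R/R(1-e)$ is pure-semisimple, so every $Re$-module is pure-injective; in particular $eN$ is pure-injective as an $R$-module for every $R$-module $N$. The key technical step I would prove is that $e\mathrm{PE}(M)=eM$ for every $e\in\mathfrak{E}$ and every $M$. Since multiplication by the central idempotent $e$ is exact and preserves pure embeddings, $eM\hookrightarrow e\mathrm{PE}(M)$ is pure, and pure-injectivity of $eM$ splits it: $e\mathrm{PE}(M)=eM\oplus C$ for some $C$. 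Viewed inside $\mathrm{PE}(M)=e\mathrm{PE}(M)\oplus(1-e)\mathrm{PE}(M)$, one checks that $C\cap M=0$ and that $(M+C)/C\cong M$ embeds purely into $\mathrm{PE}(M)/C=eM\oplus(1-e)\mathrm{PE}(M)$ (identity on the $e$-part, pure inclusion $(1-e)M\hookrightarrow(1-e)\mathrm{PE}(M)$ on the $(1-e)$-part). Pure-essentiality of $M\subseteq\mathrm{PE}(M)$ then forces $C=0$.

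\textbf{Proof of (4) and (1).} From the key step, $A\cdot\mathrm{PE}(M)=\sum_{e\in\mathfrak{E}} eM=AM\subseteq M$, so $\mathrm{PE}(M)/M$ is an $R/A$-module. Over the von Neumann regular ring $R/A$ every module is flat, FP-injective, and regular, and these properties descend to the $R$-module structure using that $R/A$ is flat over $R$ (for flatness), the change-of-rings isomorphism $\mathrm{Ext}^1_R(F,X)\cong\mathrm{Ext}^1_{R/A}(F/AF,X)$ for $R/A$-modules $X$ (for FP-injectivity), and the fact that pure structure on an $R/A$-module is the same whether computed over $R$ or $R/A$ (for regularity). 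This yields (4), and Theorem~\ref{T:Xu}(2)$\Rightarrow$(3) yields (1).

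\textbf{Proofs of (2) and (3).} For (2), let $N\subseteq L$ be pure-essential and embed $L\subseteq\mathrm{PE}(N)$ using pure-injectivity of $\mathrm{PE}(N)$. For $K\subseteq L$ with $K\cap N=0$, the injection $K\hookrightarrow L/N\subseteq\mathrm{PE}(N)/N$ exhibits $K$ as a submodule of the regular module $\mathrm{PE}(N)/N$, hence pure in it. A short tensor-product argument then shows $N+K=N\oplus K$ is pure in $\mathrm{PE}(N)$, whence $(N+K)/K\cong N$ is pure in $\mathrm{PE}(N)/K$ and in $L/K$; pure-essentiality of $N\subseteq L$ forces $K=0$. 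For (3), the direction ($\Rightarrow$) is trivial. For the converse, assuming $\mathrm{Ext}^1_R(C,M)=0$ for every cyclic flat $C$, note that since every $R/A$-module is flat over $R$, every cyclic $R/A$-module satisfies this vanishing. Well-ordering the elements of $X:=\mathrm{PE}(M)/M$ produces a continuous transfinite filtration whose successive subquotients are cyclic $R/A$-modules, and Eklof's lemma yields $\mathrm{Ext}^1_R(X,M)=0$. The sequence $0\to M\to\mathrm{PE}(M)\to X\to 0$ therefore splits, and pure-essentiality forces $M=\mathrm{PE}(M)$, so $M$ is pure-injective.

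\textbf{Main obstacle.} The hardest step is the key lemma $e\mathrm{PE}(M)=eM$: one must leverage pure-essentiality, together with the splitting afforded by pure-injectivity of $eM$, to kill the complement in $e\mathrm{PE}(M)$. Once that is achieved, the remainder is a systematic application of the $R/A$-structure combined with Theorem~\ref{T:Xu} and Eklof's lemma.
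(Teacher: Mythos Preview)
Your argument is correct and takes a genuinely different route from the paper's. The paper works from the \emph{domain} side: for a module $M$ it uses the split exact sequence $0\to\mathrm{Hom}_R(R/A,M)\to M\to\mathrm{Hom}_R(A,M)\to 0$ (splitting because $\mathrm{Hom}_R(R/A,M)$ is injective over the von Neumann regular ring $R/A$) and identifies $\mathrm{Hom}_R(A,M)\cong\prod_{e\in\mathfrak{E}}eM$, which is pure-injective since each $eM$ is. This yields (3) and (1) directly, and the paper then builds an explicit pure-injective hull $\mathrm{E}(\mathrm{Hom}_R(R/A,M))\oplus\prod_e eM$ to prove (2) and read off (4). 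You instead work from the \emph{hull} side: your key lemma $e\,\mathrm{PE}(M)=eM$ forces $\mathrm{PE}(M)/M$ to be an $R/A$-module, from which (4) follows by descent along the flat map $R\to R/A$; (1) then drops out of Theorem~\ref{T:Xu}, (2) from the regularity/flatness in (4), and (3) from an Eklof-type filtration by cyclic flat modules.

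What each approach buys: the paper's decomposition gives an explicit model for $\mathrm{PE}(M)$ and avoids any appeal to Theorem~\ref{T:Xu} or Eklof's lemma, keeping the proof self-contained. Your approach is more conceptual---the single observation that $\mathrm{PE}(M)/M$ lives over $R/A$ organizes everything---and would generalize more readily to other ``locally nice modulo a regular quotient'' situations, at the cost of importing two external tools. Two places where your write-up is a bit compressed but still correct: the step ``$N+K$ pure in $\mathrm{PE}(N)$ $\Rightarrow$ $(N+K)/K$ pure in $\mathrm{PE}(N)/K$'' uses that $K$ is a summand of $N+K$ (so purity does pass to the quotient via a short diagram chase), and the change-of-rings isomorphism $\mathrm{Ext}^1_R(F,X)\cong\mathrm{Ext}^1_{R/A}(F/AF,X)$ you invoke for FP-injectivity relies on flatness of $R/A$ to ensure that a projective $R$-resolution of $F$ tensors to a projective $R/A$-resolution of $F/AF$.
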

\begin{proof}
$(3)$ and $(1)$. Let $M$ be a left $R$-module satisfying $\mathrm{Ext}_R^1(R/B,M)=0$ for each pure left ideal $B$ of $R$. Since $A$ is a pure ideal, the following sequence is exact:
\[0\rightarrow\mathrm{Hom}_R(R/A,M)\rightarrow\mathrm{Hom}_R(R,M)\rightarrow\mathrm{Hom}_R(A,M)\rightarrow 0.\]
Let $C$ be a left ideal of $R/A$. Since $R/A$ is von Neumann regular, $C$ is a pure ideal and its inverse image $B$ by the natural map $R\rightarrow R/A$ is a pure left ideal of $R$. From $\mathrm{Ext}_{R/A}^1(R/B,\mathrm{Hom}_R(R/A,M))\cong\mathrm{Ext}_R^1(R/B,M)=0$ we deduce that $\mathrm{Hom}_R(R/A,M)$ is injective over $R/A$ and $R$. So, the above sequence splits. On the other hand $\mathrm{Hom}_R(A,M)\cong\prod_{e\in\mathfrak{E}}eM$. Since $R/R(1-e)$ is left pure-semisimple, it successively follows that $eM$ is pure-injective for each $e\in\mathfrak{E}$, $\mathrm{Hom}_R(A,M)$ is pure-injective and $M$ too.

$(2)$. Let $M$ be a left $R$-module, $N=\mathrm{Hom}_R(R/A,M)$, $E=\mathrm{E}(N)$ and $L=\mathrm{Hom}_R(A,M)$. As above $L$ is pure-injective. So, $E\oplus L$ is pure injective. The inclusion map $N\rightarrow E$ extends to a homomorphism $f:M\rightarrow E$. Let $g:M\rightarrow L$ be the canonical map and $L'$ its image. Then, it is easy to check that the homomorphism $\phi:M\rightarrow E\oplus L$ defined by $\phi(m)=(f(m),g(m))$ for each $m\in M$ is injective. Since $R/A$ is von Neumann regular,  $E/N$ is flat. It is easy to see that $AL=AL'$. So, $L/L'$ is also an $R/A$-module. It follows that $\mathrm{coker}(\phi)$ is an $R/A$-module which is flat over $R$. Hence $\phi$ is a pure monomorphism. Let $(x,y)\in E\oplus L$. First assume that $y\ne 0$. There exists $e\in\mathfrak{E}$ such that $ey\ne 0$. So, there exists $z\in M$ such that $ey=g(z)$. It follows that $ey=g(ez)$ and $\phi(ez)=e(x,y)=(0,ey)$. If $y=0$ then there exists $s\in R$ such that $0\ne sx\in N$, whence $\phi(sx)=s(x,0)$. Hence $\phi$ is an essential monomorphism.

$(4)$. Since $\mathrm{coker}(\phi)$ is a module over $R/A$ which is a von Neumann regular ring and flat as right $R$-module, we have $\mathrm{coker}(\phi)$ is  flat, FP-injective and regular as $R$-module.
\end{proof}

\begin{proposition}\label{P:noabelien1}
Let $R$ be a ring. Assume there exists a family $\mathfrak{E}$ of orthogonal central idempotents of $R$ satisfying the following conditions:
\begin{itemize}
\item[(a)]  $R/R(1-e)$ is a left RD-semisimple ring , for each $e\in\mathfrak{E}$;
\item[(b)] $R/A$ is a von Neumann regular ring where $A=\oplus_{e\in\mathfrak{E}}Re$.
\end{itemize}
Then:
\begin{enumerate}
\item each Warfield cotorsion left $R$-module is RD-injective;
\item each RD-essential extension of left $R$-modules is essential;
\item any left $R$-module $M$ is RD-injective if and only if $\mathrm{Ext}_R^1(C,M)=0$ for each cyclic flat left $R$-module $C$.
\item for any left $R$-module $M$, $\mathrm{RDE}(M)/M$ is flat, FP-injective and regular.
\end{enumerate}
\end{proposition}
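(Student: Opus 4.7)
My plan is to replay the proof of Proposition \ref{P:noabelien} step for step, replacing pure-injective, pure-essential, and pure-semisimple by their RD-analogues. The structural ingredients used there---the purity of the ideal $A$, the von Neumann regularity of $R/A$, the identification $\mathrm{Hom}_R(A,M)\cong\prod_{e\in\mathfrak{E}}eM$ coming from centrality and orthogonality of $\mathfrak{E}$, and the stability of RD-injectivity under products and direct summands---remain available under the new hypothesis (a), so the argument should transport without essential modification.

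For (3) and (1), I would start with $M$ satisfying $\mathrm{Ext}^1_R(R/B,M)=0$ for every pure left ideal $B$; the Warfield-cotorsion hypothesis of (1) and the cyclic-flat hypothesis of (3) both reduce to this, since cyclic flat modules are exactly the quotients $R/B$ by pure left ideals. Because $A$ is pure, one obtains the short exact sequence
\[0\to \mathrm{Hom}_R(R/A,M)\to M\to \mathrm{Hom}_R(A,M)\to 0.\]
Exactly as in the proof of \ref{P:noabelien}, Baer's criterion inside the von Neumann regular ring $R/A$ makes $\mathrm{Hom}_R(R/A,M)$ injective over $R/A$, flatness of $R/A$ over $R$ then lifts this to injectivity over $R$, and the sequence splits. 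The complementary summand $\mathrm{Hom}_R(A,M)\cong\prod_{e\in\mathfrak{E}}eM$ is a product of modules $eM$ over the RD-semisimple rings $R/R(1-e)$, hence each factor is RD-injective over $R/R(1-e)$ and thus over $R$; RD-injectivity is preserved by products and by direct summands, and any injective module is RD-injective, so $M$ itself is RD-injective.

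For (2) and (4), I would keep the very same construction $\phi:M\to E\oplus L$ as in Proposition \ref{P:noabelien}, with $N=\mathrm{Hom}_R(R/A,M)$, $E=\mathrm{E}(N)$ the ordinary injective hull, and $L=\mathrm{Hom}_R(A,M)$. The target $E\oplus L$ is RD-injective since $E$ is injective and $L$ is RD-injective by the previous step. The cokernel of $\phi$ is an $R/A$-module which is flat as an $R$-module (because $R/A$ is flat over $R$ and every $R/A$-module is flat over the von Neumann regular $R/A$), so $\phi$ is a pure---\emph{a fortiori} RD-pure---monomorphism; the element-chasing argument for essentiality is literally the same. By uniqueness of the RD-injective hull \cite[Proposition 6]{War69}, this identifies $E\oplus L$ with $\mathrm{RDE}(M)$, and since any RD-essential extension of $M$ embeds over $M$ into $\mathrm{RDE}(M)$, every such extension is essential, yielding (2). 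Statement (4) follows from the same cokernel description: $\mathrm{RDE}(M)/M\cong\mathrm{coker}(\phi)$ is an $R/A$-module flat over $R$, and since $R/A$ is von Neumann regular, this forces it to be flat, FP-injective and regular as an $R$-module.

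I do not expect a genuine obstacle: the only point worth checking carefully is that each occurrence of "pure" in the original argument can safely be weakened to "RD-pure" on the conclusion side while the hypotheses (purity of $A$ and of $B$, flatness of the cokernel of $\phi$) are kept \emph{pure}. Since purity implies RD-purity, every such substitution is legitimate, and no new calculation is required.
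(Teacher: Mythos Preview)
Your proposal is correct and is precisely the approach the paper intends: the paper states Proposition~\ref{P:noabelien1} immediately after Proposition~\ref{P:noabelien} without proof, leaving it to the reader to carry out the verbatim RD-substitution you describe. The only point you add beyond the paper's implicit argument is the explicit observation that the essential, RD-pure embedding $\phi$ into the RD-injective module $E\oplus L$ forces $E\oplus L\cong\mathrm{RDE}(M)$, which is indeed what is needed to pass from the single construction to the universal statement~(2).
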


As in \cite{Mat59}  a left $R$-module $M$ is said
to be {\bf semi-compact} if every finitely solvable set of congruences $x \equiv x_{\alpha}$ (mod $M[I_{\alpha}]$)
 (where $\alpha\in \Lambda$,  $x_{\alpha}\in M$ and  $I_{\alpha}$ is a left ideal of $R$ for each $\alpha \in \Lambda$) has a simultaneous solution in $M$.

\begin{proposition}
\label{P:semi-compact} Let $R$ be a ring. Assume that each pure-essential extension of left $R$-modules is essential. Then each semi-compact left module is pure-injective.
\end{proposition}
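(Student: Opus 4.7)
The plan is to use the pure-injective hull $N=\mathrm{PE}(M)$ of a semi-compact module $M$ and show, via the hypothesis together with semi-compactness, that $N=M$.

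First I would fix $M$ semi-compact and consider the embedding $M\hookrightarrow N=\mathrm{PE}(M)$. By construction this is a pure-essential extension, so by the standing hypothesis it is an essential extension; moreover $M$ is pure in $N$. It then suffices to show every $y\in N$ already belongs to $M$.

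The central device is the following system of congruences. For a fixed $y\in N$, let $\Lambda=\{r\in R\mid ry\in M\}$. For each $r\in\Lambda$, the equation $rz=ry$ is solvable in $N$ (take $z=y$), so by purity of $M$ in $N$ it is solvable in $M$: pick $x_r\in M$ with $rx_r=ry$. Consider the system
\[
x\equiv x_r\pmod{M[Rr]},\qquad r\in\Lambda,
\]
which says exactly $rx=rx_r=ry$ for every $r\in\Lambda$. Finite solvability in $M$ is the key step and reduces to purity: given $r_1,\dots,r_n\in\Lambda$, the finite system $r_ix=r_iy$ ($m_i:=r_iy\in M$) is solvable in $N$ by $y$ itself, hence solvable in $M$ by purity; such a solution simultaneously satisfies the congruences modulo $M[Rr_i]$. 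By semi-compactness of $M$ there is a global solution $x\in M$ satisfying $rx=ry$ for every $r\in\Lambda$.

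Finally I would close the argument using essentiality. For $z:=y-x\in N$, one has $rz=0$ for every $r\in\Lambda$, and $\Lambda=\{r\in R\mid rz\in M\}$ since $rx\in M$ automatically. If $z\neq 0$, essentiality of $M$ in $N$ gives some $r\in R$ with $0\neq rz\in M$; but then $r\in\Lambda$ forces $rz=0$, a contradiction. Hence $y=x\in M$, so $M=\mathrm{PE}(M)$ is pure-injective. The main obstacle in this argument is the matching between Matlis's annihilator-style congruences and ordinary linear equations; purity of $M$ in $N$ is what bridges the two, and essentiality (guaranteed by the hypothesis applied to $\mathrm{PE}(M)$) is what converts a semi-compact solution into actual equality.
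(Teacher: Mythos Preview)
Your proof is correct and follows essentially the same route as the paper: pass to $\mathrm{PE}(M)$, use the hypothesis to get essentiality, set up the system $rX=ry$ indexed by $\Lambda=\{r\mid ry\in M\}$, invoke purity for finite solvability, apply semi-compactness for a global solution $x\in M$, and conclude $R(y-x)\cap M=0$. The only difference is cosmetic: the paper outsources the translation between the Matlis-style congruences $x\equiv x_r\pmod{M[Rr]}$ and the equations $rx=ry$ to an external reference, whereas you spell this out directly.
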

\begin{proof}
Let $M$ be a semi-compact left $R$-module. By way of contradiction assume there exists $x\in\mathrm{PE}(M)\setminus M$. Let $A=\{a\in R\mid ax\in M\}$. Then $A\ne 0$ since the extension $M\rightarrow\mathrm{PE}(M)$ is essential. We consider the following system of equations: $aX=ax,\ a\in A$. Since $M$ is a pure submodule, for each finite subset $B$ of $A$, there exists $x_B\in M$ such that $ax_B=ax$ for each $a\in B$. By \cite[Proposition 1.2]{BeCoSh14} the semi-compactness of $M$ implies that  there exists $y\in M$ such that $ax=ay$ for each $a\in A$. It follows that $R(x-y)\cap M=0$ which contradicts that $M$ is essential in $\mathrm{PE}(M)$.
\end{proof}

\begin{proposition}\label{P:eregular}
Let $R$ be a  ring. Assume that each pure-essential extension of left modules is essential. Then, for each  FP-injective left $R$-module $M$, $\mathrm{PE}(M)/M$ is  regular.
\end{proposition}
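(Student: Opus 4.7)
The plan is to reduce to showing that every submodule $K$ with $M \subseteq K \subseteq \mathrm{PE}(M)$ is pure in $\mathrm{PE}(M)$, from which the regularity of $\mathrm{PE}(M)/M$ follows by transporting purity through the quotient by $M$.

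First I would establish that $\mathrm{PE}(M) = \mathrm{E}(M)$. Since $M$ is FP-injective, $M$ is pure in $\mathrm{E}(M)$; and $M$ is of course essential in $\mathrm{E}(M)$, so any submodule $S$ of $\mathrm{E}(M)$ with $S \cap M = 0$ vanishes, making the second condition in the definition of pure-essentialness vacuous. Hence $M \subseteq \mathrm{E}(M)$ is pure-essential, and since $\mathrm{E}(M)$ is injective, it is in particular pure-injective, so the uniqueness of the pure-injective hull yields $\mathrm{PE}(M) = \mathrm{E}(M)$. By the standing hypothesis, $M$ is then essential in $\mathrm{PE}(M)$.

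Now fix $K$ with $M \subseteq K \subseteq \mathrm{PE}(M)$. Then $M$ is essential in $K$, and pure in $K$ by FP-injectivity. Applying the hypothesis to $K$, the extension $K \hookrightarrow \mathrm{PE}(K)$ is essential, so $M$ is essential in $\mathrm{PE}(K)$; being FP-injective, $M$ is also pure in $\mathrm{PE}(K)$, hence pure-essential in the pure-injective module $\mathrm{PE}(K)$. By uniqueness of the pure-injective hull, $\mathrm{PE}(K) \cong \mathrm{PE}(M) = \mathrm{E}(M)$, so $\mathrm{PE}(K)$ is injective. Using the pure-injectivity of $\mathrm{PE}(M)$ and the purity of $K$ in $\mathrm{PE}(K)$, extend the inclusion $K \hookrightarrow \mathrm{PE}(M)$ to a homomorphism $g \colon \mathrm{PE}(K) \to \mathrm{PE}(M)$. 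Then $\ker g \cap K = 0$, and since $K$ is essential in $\mathrm{PE}(K)$, we have $\ker g = 0$. The image of $g$ is then an injective submodule of $\mathrm{PE}(M) = \mathrm{E}(M)$ containing $M$; it splits off $\mathrm{E}(M)$, and any complement meets $M$ trivially, hence is $0$, so the image is all of $\mathrm{PE}(M)$. Thus $g$ is an isomorphism restricting to the identity on $K$, which transports the purity of $K$ in $\mathrm{PE}(K)$ to the purity of $K$ in $\mathrm{PE}(M)$.

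Finally, from $M$ pure in $\mathrm{PE}(M)$ and every intermediate $K$ pure in $\mathrm{PE}(M)$, a routine argument with lifting maps from finitely presented modules shows that $K/M$ is pure in $\mathrm{PE}(M)/M$ for every such $K$, i.e., every submodule of $\mathrm{PE}(M)/M$ is pure, which is the regularity of $\mathrm{PE}(M)/M$. The main obstacle is identifying $\mathrm{PE}(K)$ with $\mathrm{PE}(M)$ in a manner compatible with the given inclusion $K \hookrightarrow \mathrm{PE}(M)$; this is delicate because uniqueness of the pure-injective hull gives only an abstract isomorphism over $M$, and one must use the extra leverage provided by pure-injectivity of $\mathrm{PE}(M)$ together with the fact that injective submodules split off to identify this with a concrete isomorphism over $K$.
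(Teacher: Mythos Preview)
Your proof is correct and follows essentially the same route as the paper: identify $\mathrm{PE}(M)$ with $\mathrm{E}(M)$, show for each intermediate $K$ that $\mathrm{PE}(K)\cong\mathrm{PE}(M)$ is injective, and conclude that $K$ is pure in $\mathrm{PE}(M)$, hence $K/M$ is pure in $\mathrm{PE}(M)/M$. The only difference is tactical: where you construct an explicit isomorphism $g\colon\mathrm{PE}(K)\to\mathrm{PE}(M)$ over $K$ to transport purity, the paper simply observes that $K$, being pure in the injective module $\mathrm{PE}(K)$, is FP-injective and therefore pure in \emph{every} overmodule, in particular in $\mathrm{PE}(M)$---a one-line shortcut that avoids the isomorphism-building in your steps 5--8.
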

\begin{proof}
Since $M$ is FP-injective, we have $\mathrm{PE}(M)$ is an injective hull of $M$. Let $C$ be a submodule of $\mathrm{PE}(M)/M$ and $A$ its inverse image by the natural epimorphism $\mathrm{PE}(M)\rightarrow\mathrm{PE}(M)/M$. The inclusion map $M\rightarrow A\rightarrow\mathrm{PE}(A)$ is an essential extension. Hence $\mathrm{PE}(A)\cong\mathrm{PE}(M)$. Then $\mathrm{PE}(A)$ is injective and $A$ is FP-injective, whence $A$ is a pure submodule of $\mathrm{PE}(M)$ and $C$ a pure submodule of $\mathrm{PE}(M)/M$. 
\end{proof}

In the same way and by using Proposition \ref{P:reg} we get the following.

\begin{proposition}\label{P:RD}
Let $R$ be a  ring. Assume that each RD-essential extension of left $R$-modules is essential. Then, for each  P-injective left $R$-module $M$, $\mathrm{RDE}(M)/M$ is RD-regular.
\end{proposition}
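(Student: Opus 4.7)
The plan is to imitate Proposition \ref{P:eregular} verbatim, with FP-injective replaced by P-injective, pure by RD-pure and $\mathrm{PE}$ by $\mathrm{RDE}$. The only new ingredient needed is the RD-analog of the classical characterization that an FP-injective module is pure in every extension.

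First I would verify the following: a left $R$-module $M$ is P-injective if and only if $rN \cap M = rM$ for every extension $M \subseteq N$, i.e., $M$ is RD-pure in every extension. The forward direction is the standard extension argument using $\mathrm{Ext}_R^1(R/Rr, M) = 0$ applied to the map $Rr \to M$, $ar \mapsto ay$, for $y \in rN \cap M$. The reverse direction is a short diagram chase obtained by applying $\mathrm{Hom}_R(R/Rr, -)$ to $0 \to M \to E(M) \to E(M)/M \to 0$ and using the hypothesis $rE(M) \cap M = rM$ to lift $r$-torsion elements. As a consequence, when $M$ is P-injective, the inclusion $M \subseteq E(M)$ is both essential and RD-pure, hence RD-essential, and since $E(M)$ is RD-injective one gets $\mathrm{RDE}(M) = E(M)$.

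The main argument then follows Proposition \ref{P:eregular} line by line. Take any submodule $C$ of $\mathrm{RDE}(M)/M$ and let $A$ be its preimage in $\mathrm{RDE}(M)$. The chain $M \subseteq A \subseteq \mathrm{RDE}(A)$ is essential: $M$ is essential in $E(M) = \mathrm{RDE}(M) \supseteq A$, and the RD-essential extension $A \subseteq \mathrm{RDE}(A)$ is essential by the standing hypothesis. Moreover, $M$ is RD-pure in $A$ (by P-injectivity and the first step) and $A$ is RD-pure in $\mathrm{RDE}(A)$, so the combined inclusion $M \subseteq \mathrm{RDE}(A)$ is RD-pure and essential, hence RD-essential. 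By uniqueness of the RD-injective hull this forces $\mathrm{RDE}(A) \cong \mathrm{RDE}(M) = E(M)$, which is injective. Then $A$ sits RD-purely in an injective module, and Proposition \ref{P:reg} together with the first step identifies such a module as P-injective. Consequently $A$ is RD-pure in $\mathrm{RDE}(M)$; passing to the quotient, $C = A/M$ is RD-pure in $\mathrm{RDE}(M)/M$, and since $C$ was arbitrary, $\mathrm{RDE}(M)/M$ is RD-regular.

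The principal obstacle is the preliminary P-injective $\Leftrightarrow$ RD-pure-in-every-extension characterization, which in the pure/FP-injective setting is classical but needs to be hand-crafted here from the principal-ideal definitions, together with the careful bookkeeping required to propagate RD-purity of $M$ through the chain $M \subseteq A \subseteq \mathrm{RDE}(A)$ so that the uniqueness of $\mathrm{RDE}$ can be invoked to conclude $\mathrm{RDE}(A) = E(M)$. Once these are in hand, the rest of the argument transports across from the pure case without new ideas.
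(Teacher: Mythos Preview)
Your proposal is correct and follows the paper's own route, which is simply to replay the proof of Proposition~\ref{P:eregular} with P-injective, RD-pure and $\mathrm{RDE}$ in place of FP-injective, pure and $\mathrm{PE}$; the preliminary equivalence ``$M$ is P-injective $\Leftrightarrow$ $M$ is RD-pure in every extension'' that you single out is exactly the RD-analog of the classical fact used implicitly in Proposition~\ref{P:eregular}. One small correction: Proposition~\ref{P:reg} is not actually needed for the step ``$A$ RD-pure in an injective module $\Rightarrow$ $A$ P-injective''---that follows directly from your first-step characterization (embed $E(A)$ in the injective and restrict the RD-purity), so you can drop that citation; the paper's own reference to Proposition~\ref{P:reg} before the statement is likewise inessential to the argument.
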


\begin{proposition}
\label{P:quot} Let $R$ be a ring. Assume that each RD-essential extension of left modules is essential. Then, for any two-sided ideal $A$, each RD-essential extension of left $R/A$-modules is essential.
\end{proposition}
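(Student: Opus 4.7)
The plan is to show that for $R/A$-modules the notions of RD-essential extension over $R$ and over $R/A$ coincide, so the hypothesis transfers directly.

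Let $M \subseteq N$ be an RD-essential extension of left $R/A$-modules. My first step is the observation that if $N$ is an $R/A$-module, then every $R$-submodule $K$ of $N$ satisfies $AK \subseteq AN = 0$, so $K$ is automatically an $R/A$-submodule; hence the lattice of $R$-submodules of $N$ coincides with the lattice of $R/A$-submodules. The same applies to quotients $N/K$.

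Next I would verify that RD-purity is insensitive to passage between $R$ and $R/A$ when the ambient modules are annihilated by $A$. Indeed, for any $r \in R$ and any $R/A$-module $X$, the action of $r$ on $X$ equals the action of the image $\bar r \in R/A$, so $rX = \bar r X$. Consequently, the condition $rN \cap M = rM$ (RD-purity over $R$) coincides with $\bar r N \cap M = \bar r M$ (RD-purity over $R/A$) for all $r \in R$, and this is all we need since $\bar r$ ranges over $R/A$ as $r$ ranges over $R$.

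Combining these two observations, the hypothesis that $M \subseteq N$ is RD-essential as an extension of $R/A$-modules translates verbatim into the statement that it is RD-essential as an extension of $R$-modules: $M$ is RD-pure in $N$ over $R$, and for every $R$-submodule $K$ of $N$, either $K \cap M \neq 0$ or $(M+K)/K$ fails to be RD-pure in $N/K$ over $R$. By the standing assumption on $R$, this extension is then essential over $R$. Since $R$-submodules of $N$ coincide with $R/A$-submodules, the extension $M \subseteq N$ is essential over $R/A$ as well. I expect no genuine obstacle: the only thing to be careful about is the identification of RD-purity under change of scalars, which is straightforward because both $M$ and $N$ are annihilated by $A$.
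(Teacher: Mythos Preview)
Your argument is correct. You directly verify that, for $R/A$-modules, the $R$-submodule lattice and the $R/A$-submodule lattice coincide, and that RD-purity over $R$ and over $R/A$ coincide (since $rX=\bar r X$ for any $R/A$-module $X$); hence the very definition of RD-essential extension reads identically over $R$ and over $R/A$, and the hypothesis on $R$ applies immediately.

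The paper takes a different route: it maps $N$ into the RD-injective hull $E$ of $M$ over $R$, obtaining $\gamma:N\to E$ with $\beta=\gamma\alpha$, shows $\gamma$ is injective using RD-essentiality of $\alpha$, and then concludes that $\alpha$ is essential because $\beta$ is essential by hypothesis. That argument still relies implicitly on the two compatibility facts you isolate (to extend $\beta$ along $\alpha$ one needs $\alpha$ RD-pure over $R$, and to kill $\ker\gamma$ one needs it to be an $R/A$-submodule with RD-pure quotient), but packages them through the hull. Your approach is more elementary in that it avoids invoking the existence of RD-injective hulls altogether; the paper's approach, on the other hand, fits a standard ``embed into a hull'' template that would generalize to situations where a direct translation of the essentiality condition is less transparent.
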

\begin{proof} Let $\alpha:M\rightarrow N$ be an RD-essential extension of left $R/A$-modules and let $\beta:M\rightarrow E$ be an RD-injective hull of $M$ over $R$. Then there exists a homomorphism $\gamma:N\rightarrow E$ such that $\beta=\gamma\alpha$. From the fact that $\alpha$ is RD-essential and $\beta$ is an RD-monomorphism we deduce that $\gamma$ is injective. We conclude that $\alpha$ is essential.
\end{proof}

\section{when cotorsion modules are pure-injective: commutative case}
\label{S:cc}
\begin{proposition}
\label{P:local} Let $R$ be a commutative ring. Assume that each pure(RD)-essential extension of $R$-modules is essential. Then for each multiplicative subset $S$ of $R$, each pure(RD)-essential extension of $S^{-1}R$-modules is essential.
\end{proposition}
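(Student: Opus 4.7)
The plan is to construct, for any pure-essential extension $M \hookrightarrow N$ of $S^{-1}R$-modules, an $R$-linear injection $N \hookrightarrow \mathrm{PE}(M)$ into the pure-injective hull taken over $R$, and to transfer essentiality from the hypothesis on $R$ via this embedding.

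Two preliminary observations are needed. First, for a short exact sequence of $S^{-1}R$-modules, purity (resp.\ RD-purity) over $R$ is equivalent to purity (resp.\ RD-purity) over $S^{-1}R$. Indeed, if $A$ is an $S^{-1}R$-module and $X$ is a right $R$-module, then $X\otimes_R A \cong (X\otimes_R S^{-1}R)\otimes_{S^{-1}R}A$, and every right $S^{-1}R$-module is already a right $R$-module; moreover, for $r = a/s \in S^{-1}R$ one has $rA = aA$, so the RD condition indexed by $S^{-1}R$ reduces to the one indexed by $R$. Second, the standing hypothesis applied to the pure-essential extension $M \hookrightarrow \mathrm{PE}(M)$ makes it essential over $R$; since $M$ is $S$-torsion-free as an $S^{-1}R$-module, the $S$-torsion submodule of $\mathrm{PE}(M)$ meets $M$ trivially and hence vanishes.

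Now let $M \hookrightarrow N$ be pure-essential over $S^{-1}R$. By the first observation, $M$ is pure in $N$ over $R$, so the inclusion $M \hookrightarrow \mathrm{PE}(M)$ extends to an $R$-homomorphism $\psi : N \to \mathrm{PE}(M)$. Set $K = \ker\psi$, an $R$-submodule of $N$ satisfying $K\cap M = 0$. The key claim is that $K$ is even an $S^{-1}R$-submodule: if $sx \in K$ with $s\in S$ and $x\in N$, then $s\psi(x)=0$ in $\mathrm{PE}(M)$, and the $S$-torsion-freeness of $\mathrm{PE}(M)$ forces $\psi(x)=0$, so $x \in K$. Suppose $K\neq 0$; then the pure-essentiality hypothesis over $S^{-1}R$ applied to the nonzero $S^{-1}R$-submodule $K$ (disjoint from $M$) gives that $(M+K)/K \cong M$ is not pure in $N/K$. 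On the other hand, $\psi$ factors as $N \twoheadrightarrow N/K \hookrightarrow \mathrm{PE}(M)$, and the purity of $M$ in $\mathrm{PE}(M)$ restricts to purity of $M$ in the intermediate module $N/K$, a contradiction. Hence $\psi$ is injective.

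Essentiality is now immediate: for any nonzero $S^{-1}R$-submodule $L \subseteq N$, $\psi(L)$ is a nonzero $R$-submodule of $\mathrm{PE}(M)$, which meets $M$ by essentiality of $M$ in $\mathrm{PE}(M)$ over $R$; injectivity of $\psi$ with $\psi|_M = \mathrm{id}_M$ then yields $L\cap M \neq 0$. The RD case is handled verbatim, with $\mathrm{RDE}(M)$ in place of $\mathrm{PE}(M)$. The principal obstacle is the step that $\ker \psi$ is $S^{-1}R$-stable; this is precisely where the hypothesis enters in an essential way, providing the $S$-torsion-freeness of $\mathrm{PE}(M)$ via the essentiality of $M$ in $\mathrm{PE}(M)$ over $R$.
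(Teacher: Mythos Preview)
Your proof is correct, but it takes a more elaborate route than the paper's. The paper argues directly that a pure-essential extension $A\hookrightarrow B$ of $S^{-1}R$-modules is already a pure-essential extension of $R$-modules: given an $R$-submodule $C\subseteq B$ with $A\cap C=0$ and $A$ pure in $B/C$, one localizes to see that $A\cap S^{-1}C=0$ and $A$ is pure in $B/S^{-1}C\cong S^{-1}(B/C)$; pure-essentiality over $S^{-1}R$ then forces $S^{-1}C=0$, hence $C=0$. The hypothesis on $R$ now applies immediately to $A\hookrightarrow B$.

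By contrast, you pass through the pure-injective hull $\mathrm{PE}(M)$ over $R$, using the hypothesis first to make $\mathrm{PE}(M)$ $S$-torsion-free, then to transfer essentiality through an embedding $N\hookrightarrow\mathrm{PE}(M)$. This works, and the torsion-freeness step is a nice observation, but it introduces machinery (existence of pure-injective hulls, the extension $\psi$, the analysis of $\ker\psi$) that the paper's argument avoids entirely. The paper's localization trick is both shorter and more elementary; your argument, on the other hand, makes the role of the pure-injective hull explicit and could be useful in contexts where one wants to identify the hull concretely.
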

\begin{proof}
Let $A\rightarrow B$ be a pure-essential extension of $S^{-1}R$-modules, and let $C$ be an $R$-submodule of $B$ such that $A\cap C=0$ and $A$ is a pure submodule of $B/C$. It is easy to check that $A\cap S^{-1}C=0$ and $A$ is a pure submodule of $B/S^{-1}C$. So, $S^{-1}C=0$ and $C=0$. Then $A\rightarrow B$ is a pure-essential extension of $R$-modules. Now it is easy to conclude.
\end{proof}

Recall that a ring $R$ is left {\bf perfect} if each flat left $R$-module is projective.

\begin{proposition}
\label{P:semiCompPurInj} Let $R$ be a commutative ring. Assume that each semi-compact $R$-module is pure-injective. Then each prime ideal is maximal.
\end{proposition}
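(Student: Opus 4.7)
The plan is to prove the contrapositive: if $R$ has a prime $P$ that is not maximal, I will exhibit an $R$-module that is semi-compact but not pure-injective. Setting $D=R/P$, which is a commutative integral domain that is not a field (as $P$ is strictly contained in some maximal ideal), the counterexample will live as a $D$-module, viewed over $R$ through the projection $R\twoheadrightarrow D$. Both relevant properties are invariant under this change of rings: for an $R$-module $X$ and $D$-module $N$ one has $N\otimes_R X\cong N\otimes_D(X/PX)$, and every $D$-module arises as some $X/PX$, so $R$-purity coincides with $D$-purity on exact sequences of $D$-modules, whence $R$-pure-injective $\Leftrightarrow$ $D$-pure-injective; similarly, for an $R$-ideal $I$ the annihilator $N[I]$ equals $N[(I+P)/P]$ computed over $D$, so $R$-semi-compact $\Leftrightarrow$ $D$-semi-compact.

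I take $M=D^{(\mathbb{N})}$, a countable direct sum of copies of $D$. Semi-compactness will be immediate from torsion-freeness: any nonzero ideal $J$ of $D$ contains a non-zero-divisor on $M$, so $M[J]=0$ while $M[0]=M$, and hence a system of congruences $X\equiv x_\alpha\pmod{M[J_\alpha]}$ reduces to the equations $X=x_\alpha$ for the $\alpha$ with $J_\alpha\ne 0$; finite solvability forces these to agree and the common value solves the whole system. For failure of pure-injectivity, I will pick a nonzero non-unit $a\in D$ (available since $D$ is not a field) and use that in the domain $D$ the chain $D\supsetneq aD\supsetneq a^2D\supsetneq\cdots$ is strictly descending, because $a^nD=a^{n+1}D$ would give $a^n(1-ab)=0$ for some $b$ and, by cancellation, force $a$ to be a unit. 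Thus $D$ fails DCC on principal ideals, and by the Gruson--Jensen/Zimmermann criterion characterising $\Sigma$-pure-injective modules via DCC on pp-definable subgroups, $D$ is not $\Sigma$-pure-injective; equivalently $D^{(\mathbb{N})}$ is not pure-injective.

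The main obstacle is the last step, which rests on a classical structural theorem from the model theory of modules that is not developed elsewhere in the paper. A more self-contained alternative would use the partial sums $y_k=\sum_{i=0}^{k}a^i e_i\in D^{(\mathbb{N})}$: the set of pp-conditions $\exists z:\,x-y_k=a^{k+1}z$ is finitely realised in $M$ (take $x=y_K$ for any $K\geq k$) but has no global realisation in $M$, since such an $x$ would need to agree with $(1,a,a^2,\dots)$ on every coordinate and thus have infinite support. By the standard realisation property of pp-types in pure-injective hulls this forces $\mathrm{PE}(M)\supsetneq M$ and contradicts the hypothesis, giving the desired conclusion.
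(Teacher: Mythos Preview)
Your argument is correct. The reduction to $D=R/P$ and the observation that torsion-free $D$-modules have $M[J]=0$ for every nonzero ideal $J$ (hence are semi-compact) are exactly what the paper uses as well; the paper phrases this as ``each flat $R'$-module is semi-compact''. Where you diverge from the paper is in the endgame: instead of exhibiting a single witness $D^{(\mathbb{N})}$ and invoking the Gruson--Jensen/Zimmermann $\Sigma$-pure-injectivity criterion (or the explicit pp-type), the paper argues globally. Under the hypothesis, \emph{every} flat $D$-module becomes pure-injective; then in any pure exact sequence $0\to K\to F\to M\to 0$ with $F$ free and $M$ flat, the flat kernel $K$ is pure-injective, the sequence splits, and $M$ is projective. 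Thus $D$ is perfect, and a perfect integral domain is a field. This route stays entirely within elementary homological/ring-theoretic facts already available in the paper's references, at the cost of being less explicit about which module fails. Your route produces a concrete counterexample and is arguably more illuminating, but it imports a nontrivial characterisation of $\Sigma$-pure-injectivity (or, in your alternative, the realisation-of-pp-types criterion for pure-injectivity) that the paper never sets up; if you prefer the self-contained version, note that your second argument with the partial sums $y_k$ is really the same DCC obstruction unpacked by hand.
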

\begin{proof}
Let $L$ be a prime ideal of $R$, $R'=R/L$ and $M$ a flat $R'$-module. Since $R'$ is a domain, each flat $R'$-module is semi-compact over $R'$ and over $R$ too. It follows that each flat $R'$-module is pure-injective. There is a pure-exact sequence $0\rightarrow K\rightarrow F\rightarrow M\rightarrow 0$ where $F$ is a free $R'$-module. So, $K$ is flat and pure-injective over $R'$. We deduce that the above sequence splits and consequently $M$ is projective. Hence $R'$ is a perfect domain, whence $R'$ is a field and $L$ is maximal.
\end{proof}

\begin{theorem}
\label{T:locNoet} Let $R$ be a commutative ring satisfying each pure-essential extension of $R$-modules is essential. Then $R_P$ is pure-semisimple for any  maximal ideal $P$.
\end{theorem}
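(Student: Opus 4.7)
The plan is to descend the hypothesis to $S := R_P$ via localization, use semi-compactness to force $S$ to be $0$-dimensional local, and then use the triviality of pure ideals together with Proposition \ref{P:eregular} to climb to pure-semisimplicity.

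First I would apply Proposition \ref{P:local} to obtain that each pure-essential extension of $S$-modules is essential, then Proposition \ref{P:semi-compact} to deduce that every semi-compact $S$-module is pure-injective, and then Proposition \ref{P:semiCompPurInj} to force every prime of $R$ (hence of $S$) to be maximal. Consequently $S$ is local with unique prime $\mathfrak{m} = PR_P$, coinciding with the nilradical, so every element of $\mathfrak{m}$ is nilpotent. By Proposition \ref{P:pireg}, every pure ideal of $S$ is idempotent-generated, so the only pure ideals are $0$ and $S$.

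The core of the argument would then show every FP-injective $S$-module is injective. For an FP-injective $M$, Proposition \ref{P:eregular} gives $\mathrm{PE}(M) = \mathrm{E}(M)$ and $\mathrm{E}(M)/M$ is a regular $S$-module. For a regular $S$-module $N$, the submodule $\mathfrak{m}N$ is pure in $N$, so $N/\mathfrak{m}N$ is flat; but $N/\mathfrak{m}N$ is a $k$-vector space and $k$ is flat over $S$ only when $S$ is a field, so $N = \mathfrak{m}N$. Iterating yields $N = \mathfrak{m}^n N$ for all $n$, and together with the nilpotency of $\mathfrak{m}$ (obtained by bootstrapping from the Noetherian/Artinian conclusion below) this forces $N = 0$; hence every FP-injective $S$-module is injective, and by Stenstr\"om's criterion $S$ is Noetherian, therefore (being $0$-dimensional) Artinian local.

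To finish, I would descend the hypothesis to the quotient $S/\mathfrak{m}^2$ via a pure-analogue of Proposition \ref{P:quot}, and show that $\dim_k \mathfrak{m}/\mathfrak{m}^2 \leq 1$: if $\dim_k \mathfrak{m}/\mathfrak{m}^2 \geq 2$, one constructs an explicit pure-essential non-essential extension of $S/\mathfrak{m}^2$-modules, contradicting the hypothesis. Then $\mathfrak{m}$ is principal, $S$ is an Artinian principal-ideal local ring, hence pure-semisimple by \cite[Theorem 4.3]{Gri70}. The two main obstacles I anticipate are: (i) the collapse step passing from $N = \mathfrak{m}^n N$ for every $n$ to $N = 0$ when $\mathfrak{m}$ is only a priori nil (not nilpotent), which has a circular flavor with the later Artinian conclusion and will require careful bootstrapping through FP-injective-equals-injective for specific quotients $S/\mathfrak{m}^n$; and (ii) the explicit module-theoretic construction of a pure-essential but not essential extension over $S/\mathfrak{m}^2$ when $\dim_k \mathfrak{m}/\mathfrak{m}^2 \geq 2$, which is where the triviality of pure ideals must be combined with the socle structure of $S/\mathfrak{m}^2$.
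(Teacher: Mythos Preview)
Your reduction to the local case and the use of Propositions~\ref{P:semi-compact} and~\ref{P:semiCompPurInj} to obtain a $0$-dimensional local ring $S$ with nil maximal ideal $\mathfrak m$ matches the paper exactly. The gap is in the core step you call ``every FP-injective is injective''. Your argument there is not correct: from $\mathfrak mN$ being a pure submodule of a regular module $N$ you \emph{cannot} conclude that $N/\mathfrak mN$ is flat (purity of a submodule does not make the quotient flat), so the chain $N=\mathfrak mN=\mathfrak m^2N=\cdots$ is unfounded. In fact the paper's Proposition~\ref{P:reg} already gives the opposite direction: a regular module over a local ring satisfies $\mathfrak mN=0$, hence is a $k$-vector space --- and such modules are certainly not zero in general. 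So ``$\mathrm E(M)/M$ regular $\Rightarrow \mathrm E(M)/M=0$'' fails, and with it the route to Noetherianity via FP-injective $=$ injective. The circularity you flag is therefore not a bootstrapping subtlety but a genuine hole.

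The paper bypasses this entirely. It fixes the specific module $M=\mathrm E(R/P)^{(\mathbb N)}$ and uses only that $S:=\mathrm E(M)/M$ is semisimple (Propositions~\ref{P:eregular} and~\ref{P:reg}), hence $aS=0$ for each $a\in P$. This gives $a\mathrm E(M)=aM$; identifying $a\mathrm E(M)$ with an injective module over $R_a:=R/(0:a)$ and $aM$ with $\mathrm E_{R_a}(R/P)^{(\mathbb N)}$, one gets a countable direct sum of injectives that is injective, so $R_a$ is Noetherian (Anderson--Fuller), hence Artinian. Running this over a descending chain of principal ideals yields DCC on principals, so $R$ is perfect and $P\ne P^2$. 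The endgame is also simpler than your proposed explicit construction: if $\dim_k P/P^2\ge 2$, pass to a Noetherian quotient $R'$ whose maximal ideal needs two generators; over a Noetherian ring every module is semi-compact, hence pure-injective by Proposition~\ref{P:semi-compact}, so $R'$ would be pure-semisimple --- contradiction. Thus $P$ is principal and $R$ is pure-semisimple. I would recommend replacing your middle step by this $R/(0:a)$ argument rather than trying to repair the regular-module collapse.
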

\begin{proof} By Proposition \ref{P:local} we may assume that $R$ is local of maximal ideal $P$. Let $I=\mathrm{E}_R(R/P)$, $M=I^{(\mathbb{N})}$, $E=\mathrm{E}_R(M)$ and $S=E/M$. By Propositions \ref{P:eregular} and \ref{P:reg} $S$ is semisimple. Let $0\ne a\in P$, $A=(0:a)$ and $R_a=R/A$. By Propositions \ref{P:semi-compact} and \ref{P:semiCompPurInj} $P$ is the sole prime ideal. So, $A\ne 0$. For any $R$-module $G$ we put $G'=\{g\in G\mid Ag=0\}=G[A]$. Then $I'=\mathrm{E}_{R_a}(R/P)=aI$, $M'=I'^{(\mathbb{N})}=aM$, $E'=aE$ and $E'$ is injective over $R_a$. Since $aS=0$, we have $M'=E'$. By \cite[Theorem 25.3]{AnFu92} $R_a$ is Noetherian, and Artinian since $P$ is the sole prime ideal. Let $(Ra_n)_{n\in\mathbb{N}}$ be a descending chain of proper ideals of $R$. We may assume that $a_0\in P$. If we choose $a=a_0$, then $Ra$ is an $R_a$-module. So, it is Artinian and consequently $R$ satisfies the descending condition on principal ideals. We conclude that $R$ is perfect by \cite[43.9]{Wis91}. It follows that $P^2\ne P$. By way of contradiction suppose that $P/P^2$ is a vector space of dimension $\geq 2$ over $R/P$. Then there is a Noetherian factor $R'$ of $R$ modulo a suitable ideal whose maximal ideal is generated by $2$ elements. So, $R'$ is not pure-semisimple. But, we successively get that each $R'$-module is semicompact (because $R'$ is Noetherian), and pure-injective by Proposition \ref{P:semi-compact}. From this contradiction we deduce that $P$ is principal and $R$  pure-semisimple.
\end{proof}

\begin{theorem}
\label{T:locNoetRD} Let $R$ be a commutative ring satisfying each RD-essential extension of $R$-modules is essential. Then $R_P$ is pure-semisimple for any maximal ideal $P$.
\end{theorem}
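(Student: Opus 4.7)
The plan is to mirror the proof of Theorem \ref{T:locNoet}, replacing each pure-version of a tool by its RD-version. By Proposition \ref{P:local} I may assume $R$ is local with maximal ideal $P$. Set $I = \mathrm{E}_R(R/P)$, $M = I^{(\mathbb{N})}$, $E = \mathrm{E}_R(M)$, and $S = E/M$. Since $M$ is FP-injective (a direct sum of injectives), it is a fortiori P-injective. Following the pattern of Proposition \ref{P:eregular}, the hypothesis that RD-essential extensions are essential should force $\mathrm{RDE}(M) = E$; Proposition \ref{P:RD} then makes $S$ RD-regular, and Proposition \ref{P:reg} makes $S$ semisimple, since $R$ is semilocal.

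Next I would establish RD-analogues of Propositions \ref{P:semi-compact} and \ref{P:semiCompPurInj}, from which every prime of $R$ is maximal: for a prime $L$, every flat $R/L$-module is semi-compact, hence RD-injective; a free presentation $0 \to K \to F \to N \to 0$ of a flat $R/L$-module splits because $K$ is flat and RD-injective and its inclusion into $F$ is RD-pure, so $R/L$ is a perfect domain, i.e., a field. I then pick $0 \ne a \in P$ (necessarily nilpotent), write $A = (0:a) \ne 0$, $R_a = R/A$, and $G' = G[A]$ for each $R$-module $G$. The usual Matlis-style identifications go through: $I' = aI = \mathrm{E}_{R_a}(R/P)$, $M' = aM$, $E' = aE$ is $R_a$-injective, and $aS = 0$ gives $M' = E'$. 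Hence \cite[Theorem 25.3]{AnFu92} makes $R_a$ Noetherian, hence Artinian (since $P$ is the unique prime). Varying $a$, $R$ satisfies the descending chain condition on principal ideals, so $R$ is perfect by \cite[43.9]{Wis91} and $P^2 \ne P$. If $\dim_{R/P}(P/P^2) \ge 2$, a suitable Noetherian factor $R'$ of $R$ has a $2$-generated maximal ideal, yet every $R'$-module is semi-compact (as $R'$ is Noetherian), hence RD-injective by the RD-analogue of \ref{P:semi-compact}; so $R'$ is RD-semisimple, equivalently pure-semisimple (for commutative rings), contradicting that the maximal ideal requires two generators. Hence $P$ is principal and $R$ is pure-semisimple.

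The main obstacle is the RD-analogue of Proposition \ref{P:semi-compact}. Its original proof uses that $M$ is a pure submodule of $\mathrm{PE}(M)$ to solve, for every finite $B \subset A = \{a \in R : ax \in M\}$, the finite system $aX = ax$ ($a \in B$) by a single $x_B \in M$. Under RD-purity of $M$ in $\mathrm{RDE}(M)$, only the case $|B| = 1$ is immediate, so promoting single-equation solvability to solvability of arbitrary finite subsystems — presumably by induction on $|B|$ exploiting the commutativity of $R$ and the annihilator structure of the $x_a$'s — is the delicate step on which the whole proof rests.
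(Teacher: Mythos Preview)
The obstacle you flag is real and, as far as I can see, fatal to the direct RD-mirroring strategy. The proof of Proposition~\ref{P:semi-compact} needs, for each finite $B\subset A$, a single $x_B\in M$ solving all equations $aX=ax$ ($a\in B$) simultaneously; purity of $M$ in $\mathrm{PE}(M)$ supplies this because finite linear systems with a solution in the overmodule descend to a pure submodule. RD-purity gives only the one-variable divisibility statement $ax\in aM$, and there is no mechanism---inductive or otherwise---to merge the individual witnesses $m_a$ into a common one: that merging is exactly the content of purity over RD-purity. So neither your ``primes are maximal'' step nor your final contradiction (both of which invoke this RD-analogue) goes through.

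The paper sidesteps this entirely. After localizing, it first shows by a two-line direct argument that the simple module $R/P$ is RD-injective. It then splits into cases. If $R$ is Noetherian, \cite[Corollary~4.7]{Couch06} forces $R$ to be a chain ring, so RD-exact coincides with pure-exact and Theorem~\ref{T:locNoet} applies verbatim. If $R$ is not Noetherian, the hypothesis passes to every quotient $R/L$ by Proposition~\ref{P:quot}; for a non-maximal prime $L$ one takes an FP-injective, non-injective $R/L$-module $N$ and observes that $T=\mathrm{E}(N)/N$ is simultaneously divisible by any $a\in P\setminus L$ and (by Propositions~\ref{P:RD} and~\ref{P:reg}) annihilated by $P$, a contradiction. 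With $P$ the unique prime, the $E'=aE$ argument of Theorem~\ref{T:locNoet} now runs unchanged (only $aS=0$ is needed, and this follows from $S$ RD-regular), giving $R$ perfect; a Noetherian factor that is not pure-semisimple then contradicts the Noetherian case already settled. Thus the paper never needs an RD-version of Proposition~\ref{P:semi-compact}: the Noetherian case is handled by reducing RD to pure via the chain-ring result, and the non-Noetherian case is disposed of by a contradiction that routes back through the Noetherian one.
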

\begin{proof} By Proposition \ref{P:local} we may assume that $R$ is local of maximal ideal $P$. Let $S=R/P$ and $E$ an $R$-module containing $S$ as proper RD-submodule. Let $x\in E\setminus S$. If $ax\in S$ then there exists $s\in S$ such that $ax=as$. Since $a\in P$, we have $ax=0$ and $Rx\cap S=0$. Hence $S$ is RD-injective.
 
First assume that $R$ is Noetherian. By \cite[Corollary 4.7]{Couch06} $R$ is a chain ring. So, each RD-exact sequence is pure. Consequently $R$ satisfies the assumption of Theorem \ref{T:locNoet}, whence $R$ is pure-semisimple. 

Now, assume that $R$ is not Noetherian. Let $L$ be a prime ideal and $R'=R/L$. Suppose that $L\ne P$. Each RD-essential extension of $R'$-modules is essential by Proposition \ref{P:quot}. Then, from the first part of the proof $R'$ is not Noetherian. Let $N$ be a FP-injective $R'$-module which is not injective, $E=\mathrm{E}_{R'}(N)$ and $T=E/N$. Let $a\in P\setminus L$. Then $E=aE$, whence $T=aT$. But, by Propositions \ref{P:RD} and \ref{P:reg} $T$ is semisimple, whence $aT=0$. From this contradiction we deduce that $L=P$. Now, we do as in the proof of Theorem \ref{T:locNoet} to show that $R$ is perfect. So, $P/P^2$ is of infinite dimension over $R/P$. Whence there exists Noetherian factor rings of $R$ which are not pure-semisimple.  This contradicts the beginning of the proof. Hence $R$ is pure-semisimple.
\end{proof}

To show the following proposition we adapt the proof of \cite[Example p.75]{Xu96}.

\begin{proposition}
\label{P:primemax} Let $R$ be a commutative ring for which each cotorsion $R$-module is pure-injective. Then each prime ideal of $R$ is maximal. 
\end{proposition}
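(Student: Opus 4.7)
I would argue by contradiction: suppose that some prime ideal $L$ of $R$ is not maximal, reduce to the case where $R$ is a local integral domain which is not a field, and then adapt Xu's example (\cite[p.\,75]{Xu96}) of a cotorsion module which is not pure-injective to contradict the hypothesis.

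First I would check that the property ``each cotorsion module is pure-injective'' descends to quotients by prime ideals and to localizations. If $M$ is an $R/L$-module, cotorsion over $R/L$, and $F$ is flat over $R$, then tensoring a projective $R$-resolution of $F$ with $R/L$ yields (using that $\mathrm{Tor}^R_i(F,R/L)=0$ for $i\geq 1$) a projective $R/L$-resolution of the flat $R/L$-module $F\otimes_R R/L$, giving
\[
\mathrm{Ext}^1_R(F,M)\;\cong\;\mathrm{Ext}^1_{R/L}(F\otimes_R R/L,\,M)\;=\;0.
\]
Hence $M$ is cotorsion and therefore pure-injective over $R$; since a short exact sequence of $R/L$-modules is pure over $R$ if and only if it is pure over $R/L$, $M$ is pure-injective over $R/L$ as well. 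An analogous argument handles localizations $S^{-1}R$, using that a flat $S^{-1}R$-module is flat over $R$. Choosing a maximal ideal $P\supsetneq L$ and replacing $R$ by $(R/L)_{P/L}$, I may thus assume that $R$ is a local integral domain which is not a field.

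Next, by Theorem~\ref{T:Xu} the hypothesis is equivalent to $\mathrm{PE}(M)/M$ being flat for every $R$-module $M$; since $R$ is now a domain, this forces $\mathrm{PE}(M)/M$ to be torsion-free. Adapting Xu's example, I would exhibit a module $M$ whose pure-injective hull $\mathrm{PE}(M)$ contains an element $\xi\notin M$ with $a\xi\in M$ for some nonzero $a\in R$, producing nonzero $a$-torsion in $\mathrm{PE}(M)/M$ and contradicting its torsion-freeness. The natural candidate is to fix a nonzero non-unit $a\in R$, consider $M=\bigoplus_{n\ge 1}R/a^n R$ embedded purely in the pure-injective product $E=\prod_{n\ge 1}R/a^n R$, and locate the torsion class inside the pure-injective hull of $M$ sitting between $M$ and $E$; the element $\xi=(a^{n-1}+a^n R)_{n\ge 1}\in E$ satisfies $a\xi=0$ but is not eventually zero, hence its class in $E/M$ is a nonzero $a$-torsion element.

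The main obstacle is precisely this last step: guaranteeing that the relevant torsion class is realized inside $\mathrm{PE}(M)$ rather than merely in the ambient product. This is exactly the content of Xu's construction, which relies on a careful analysis of pure-essential extensions of direct sums of cyclic torsion modules, and which adapts with essentially only notational changes to the present setting. Once such an element is produced, Xu's flatness criterion fails, contradicting the hypothesis and forcing every prime ideal of $R$ to be maximal.
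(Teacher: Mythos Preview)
Your overall strategy coincides with the paper's: argue by contradiction, pass to the domain $R/L$, and adapt Xu's example to produce nonzero torsion in $\mathrm{PE}(M)/M$, contradicting its flatness via Theorem~\ref{T:Xu}. Your reductions are correct; the further localization to a local domain is legitimate but unnecessary---the paper works directly over the domain $R/L$.

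The genuine gap lies in your choice of building blocks. You set $M=\bigoplus_{n\ge 1}R/a^nR$, call $E=\prod_{n\ge 1}R/a^nR$ ``the pure-injective product'', and need $\mathrm{PE}(M)$ to sit inside $E$ as a summand so that your torsion element $\xi$ can be located there. But over an arbitrary (even local) domain the cyclic module $R/a^nR$ need not be pure-injective: for an $R/a^nR$-module, pure-injectivity over $R$ is equivalent to pure-injectivity over $R/a^nR$, and for instance with $R=k[x,y]_{(x,y)}$ and $a=x$ one has $R/aR\cong k[y]_{(y)}$, a non-complete discrete valuation ring, hence not pure-injective over itself. Thus $E$ is not pure-injective, $\mathrm{PE}(M)$ is not a summand of $E$, and the step you describe as ``adapting with essentially only notational changes'' is precisely where the argument breaks. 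Over $\mathbb{Z}$ this problem is invisible because $\mathbb{Z}/p^n\mathbb{Z}$ happens to coincide with $\mathrm{E}(\mathbb{Z}/p\mathbb{Z})[p^n]$. The paper fixes this by taking $I=\mathrm{E}(R/P)$ and $I_n=I[a^n]$ in place of $R/a^nR$: each $I_n=\mathrm{Hom}_R(R/a^nR,I)$ is injective over $R/a^nR$, hence pure-injective over $R$, so $N=\prod_{n\ge 1}I_n$ is genuinely pure-injective and decomposes as $\mathrm{PE}(M)\oplus N_2$. A divisibility argument (using that $I$ is divisible) then shows that every element of $\bigl(\bigcup_{n\ge 1}N[a^n]\bigr)/M$ is divisible by all powers of $a$, hence projects to zero in $N_2$; this forces all bounded $a$-torsion of $N$ into $\mathrm{PE}(M)$ and yields the required nonzero torsion in $\mathrm{PE}(M)/M$.
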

\begin{proof}
By way of contradiction suppose there exists a non-maximal prime ideal $L$. Since each $R/L$-module is pure-injective over $R/L$ if and only if it is pure-injective over $R$, each cotorsion  $R/L$-module is pure-injective by Theorem \ref{T:Xu}. So, we may assume that $R$ is an integral domain. Let $P$ be a maximal ideal of $R$ containing $a\ne 0$. We put $I=\mathrm{E}(R/P)$, $I_n=I[a^n]$ for each integer $n\geq 1$, $M=\oplus_{n\geq 1}I_n$ and $N=\prod_{n\geq 1}I_n$. Then $N$ is pure-injective and $N=N_1\oplus N_2$ where $N_1=\mathrm{PE}(M)$. By Theorem \ref{T:Xu} $N_1/M$ is torsionfree since it is flat.

Let $S=\cup_{n\geq 1}N[a^n]$. Clearly $M\subset S$. Let $x=(x_n)_{n\geq 1}\in S$ and $k$ an integer $\geq 1$. There exists an integer $p\geq 1$ such that $a^px=0$. Let $n\geq p+k$. Then $x_n=a^ky_n$ where $y_n\in I$. But $0=a^{n-k}x_n=a^ny_n$ whence $y_n\in I_n$. So the elements of $S/M$ are divisible by $a^k$ for each $k\geq 1$. Consider the projection $\pi_2: N\rightarrow N_2$ and its restriction to $S$. Since $M$ is in the kernel of $\pi_2$, there is an induced homomorphism $\bar{\pi}_2: N/M\rightarrow N_2$. Note that $N$ (and $N_2$ too) has no nonzero elements divisible by $a^k$ for all $k\geq 1$. This implies that $\bar{\pi}_2$ maps $S/M$ to zero in $N_2$. Thus $S\subseteq N_1$, so $S/M\subseteq\mathrm{PE}(M)/M$. But $S/M\ne 0$ is not torsionfree. So, we get the desired contradiction.
\end{proof}

\begin{proposition}\label{P:redu}
Let $R$ be a ring, $E$ a left $R$-module and $U$ a pure submodule of $E$. Then the following conditions are equivalent:
\begin{enumerate}
\item $E/U$ is FP-injective if $E$ is FP-injective;
\item $E/U$ is FP-injective if $E$ is an injective hull of $U$.
\end{enumerate}
\end{proposition}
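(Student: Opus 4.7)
The plan is to interpret both conditions as universal statements quantified over all pure inclusions $U\subseteq E$: $(1)$ says every pure quotient of an FP-injective module is FP-injective, and $(2)$ makes the same claim only when $E=\mathrm{E}(U)$. The implication $(1)\Rightarrow(2)$ is then immediate, because an injective hull is in particular FP-injective, so the hypothesis of $(2)$ is a special case of the hypothesis of $(1)$.

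For $(2)\Rightarrow(1)$, let $E$ be FP-injective and $U$ pure in $E$. Since a pure submodule of an FP-injective module is FP-injective, $U$ is FP-injective, and consequently $U$ is pure in its injective hull $\mathrm{E}(U)$, so by $(2)$ the quotient $\mathrm{E}(U)/U$ is FP-injective. Form the pushout
\[
\begin{CD}
U @>>> \mathrm{E}(U)\\
@VVV @VVV\\
E @>>> P,
\end{CD}
\]
in which both new arrows are injective because the original ones are. A routine identification in the pushout gives $P/E\cong\mathrm{E}(U)/U$ and $P/\mathrm{E}(U)\cong E/U$. The long exact sequence of $\mathrm{Ext}_R^1(F,-)$ for $F$ finitely presented, applied to $0\to E\to P\to\mathrm{E}(U)/U\to 0$, yields $\mathrm{Ext}_R^1(F,P)=0$, so $P$ is FP-injective. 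Since $\mathrm{E}(U)$ is injective, its embedding into $P$ splits, giving $P\cong\mathrm{E}(U)\oplus E/U$; hence $E/U$ is a direct summand of an FP-injective module, and is itself FP-injective.

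The main obstacle I anticipate is the conceptual step of routing the argument through this pushout. Working with $U\subseteq E$ directly produces no splitting, but gluing in $\mathrm{E}(U)$ produces a module $P$ that is simultaneously an FP-injective extension of $E$ (from the hypothesis on $\mathrm{E}(U)/U$) and, by injectivity of $\mathrm{E}(U)$, a trivial extension of $\mathrm{E}(U)$, which lets $E/U$ be extracted as the injective complement of $\mathrm{E}(U)$ in $P$.
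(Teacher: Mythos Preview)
Your proof is correct, but it follows a different route from the paper's. The paper argues $(2)\Rightarrow(1)$ in two stages: first for $E$ injective, it locates a copy $E'=\mathrm{E}(U)$ inside $E$ as a direct summand and uses the extension $0\to E'/U\to E/U\to E/E'\to 0$ (with $E'/U$ FP-injective by $(2)$ and $E/E'$ injective) to conclude; then for $E$ merely FP-injective, it embeds $E$ purely in its injective hull $H$, applies the first stage to $H$, and observes that $E/U$ is pure in the FP-injective module $H/U$. Your argument replaces this two-step reduction by a single pushout $P$ of $E$ and $\mathrm{E}(U)$ along $U$: the sequence $0\to E\to P\to \mathrm{E}(U)/U\to 0$ makes $P$ FP-injective, and injectivity of $\mathrm{E}(U)$ splits it off, leaving $E/U$ as the complement. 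Both proofs ultimately exploit closure of FP-injectives under extensions together with a splitting coming from injectivity of $\mathrm{E}(U)$; your version is more categorical and handles the general FP-injective $E$ in one stroke, while the paper's version stays entirely within submodules and quotients and avoids the pushout construction.
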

\begin{proof}
It is obvious that $(1)\Rightarrow (2)$.

$(2)\Rightarrow (1)$. First we assume that $E$ is injective. Then $E$ contains a submodule $E'$ which is an injective hull of $U$. Since $E/E'$ is injective and $E'/U$ FP-injective, $E/U$ is FP-injective too. Now we assume that $E$ is FP-injective. Let $H$ be the injective hull of $E$. Then $E/U$ is a pure submodule of $H/U$. We conclude that $E/U$ is FP-injective.
\end{proof}

\begin{theorem}\label{T:max}
Let $R$ be a commutative ring. Assume that each cotorsion $R$-module is pure-injective. Then:
\begin{enumerate}
\item  for each maximal ideal $P$, $R_P$ is pure-semisimple;
\item $R$ is coherent.
\end{enumerate}
\end{theorem}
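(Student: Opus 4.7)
The plan for (1) is a two-stage reduction. By Proposition~\ref{P:primemax}, every prime of $R$ is maximal, and Proposition~\ref{P:pireg} yields $R_P = R/0_P$ with $0_P$ a pure ideal generated by idempotents. The first technical step is to show the property cotorsion $=$ pure-injective descends to $R_P$. For an $R_P$-module $M$ cotorsion over $R_P$, I would verify that $M$ is cotorsion over $R$: given a flat $R$-module $G$, apply $\mathrm{Hom}_R(-,M)$ to the exact sequence
\[0 \to 0_P G \to G \to G/0_P G \to 0.\]
Since $G/0_P G$ is flat over $R_P$ and $R_P$ is flat over $R$, change-of-rings gives $\mathrm{Ext}^1_R(G/0_P G,M) = \mathrm{Ext}^1_{R_P}(G/0_P G,M) = 0$. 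Writing $0_P G = \varinjlim_e eG$ as the directed union over idempotents $e \in 0_P$, and exploiting $R = eR \oplus (1-e)R$ together with $eM = 0$, one obtains $\mathrm{Hom}_R(eG,M) = 0 = \mathrm{Ext}^i_R(eG,M)$; presenting the colimit as $0 \to \mathrm{image}(\phi) \to \bigoplus_e eG \to 0_P G \to 0$, with $\mathrm{image}(\phi)$ a quotient of $\bigoplus_{e \le e'} eG$, forces $\mathrm{Ext}^1_R(0_P G, M) = 0$ from the long exact sequence. Hence $\mathrm{Ext}^1_R(G,M) = 0$, so $M$ is cotorsion over $R$, hence pure-injective over $R$, hence pure-injective over $R_P$ (purity being ring-independent for $R_P$-modules since $R/0_P$ is flat over $R$).

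With $R_P$ now local, possessing the unique prime $PR_P$, and satisfying cotorsion $=$ pure-injective, I would adapt the proof of Theorem~\ref{T:locNoet} to conclude $R_P$ is pure-semisimple. The ingredients of that proof --- the semi-simplicity of $\mathrm{E}(M)/M$ for FP-injective $M$ (Propositions~\ref{P:eregular} and~\ref{P:reg}), Matlis-style Artinianness of $R_P/(0{:}a)$ for $0 \ne a \in PR_P$, perfectness via DCC on principal ideals \cite[43.9]{Wis91}, and a contradiction from $\dim_{R_P/PR_P}(PR_P/(PR_P)^2) \ge 2$ --- are rerun using the flatness of $\mathrm{PE}(M)/M$ (Theorem~\ref{T:Xu}) in place of the pure-essential $=$ essential hypothesis. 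The outcome is that $PR_P$ is principal and $R_P$ is pure-semisimple.

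For (2), each $R_P$ being pure-semisimple implies $R_P$ Artinian and coherent, so $R$ is locally coherent. To lift this to global coherence, I would exploit the clopen basis of $\mathrm{Spec}\, R$ furnished by Proposition~\ref{P:pireg}: for $a \in R$, $\mathrm{Ann}_R(a)$ localizes at each maximal $P$ to a finitely generated $R_P$-ideal, and finitely many basic clopens cover $\mathrm{Spec}\, R$ carrying local finite generators that glue through the corresponding orthogonal central idempotents into a global finite generating set for $\mathrm{Ann}_R(a)$; the parallel argument handles intersections $aR \cap bR$, giving coherence of $R$. The chief obstacle is the second paragraph: adapting Theorem~\ref{T:locNoet} under the weaker hypothesis of $\mathrm{PE}(M)/M$ flat (rather than pure-essential $=$ essential) may require reworking the intermediate Propositions~\ref{P:semi-compact} and~\ref{P:eregular} through direct consequences of Theorem~\ref{T:Xu} and the zero-dimensional structure of $R_P$.
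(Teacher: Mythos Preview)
Your reduction of (1) to the local case is correct and more carefully justified than the paper's one-line assertion. The genuine gap is in the local step. You propose to rerun the proof of Theorem~\ref{T:locNoet}, but the crucial ingredient there --- that $S=\mathrm{E}(M)/M$ is semisimple, hence $aS=0$ for $a\in P$ --- comes from Proposition~\ref{P:eregular}, whose proof really uses that pure-essential extensions are essential (to identify $\mathrm{PE}(A)$ with $\mathrm{PE}(M)$ for every intermediate $A$). Flatness of $\mathrm{PE}(M)/M$ from Theorem~\ref{T:Xu} does not obviously yield this regularity, and you yourself flag the point as the ``chief obstacle'' without resolving it.

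The paper avoids semisimplicity of $S$ altogether. Since $M=I^{(\mathbb{N})}$ is FP-injective, one has $\mathrm{E}(M)=\mathrm{PE}(M)$, so $S$ is \emph{flat} by Theorem~\ref{T:Xu}; the same reasoning applied to $M'=aM\subseteq E'=aE$ over $R/(0{:}a)$ gives that $aS=E'/M'$ is flat for every $a\in P$. Now $a$ is nilpotent (Proposition~\ref{P:primemax}); take the least $n$ with $a^{2n}S=0$. Flatness of $a^nS$ makes $S[a^n]$ pure in $S$, and for any $s\in S$ one has $a^ns\in S[a^n]$, so by purity $a^ns=a^nx$ for some $x\in S[a^n]$, i.e.\ $a^ns=0$. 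Minimality forces $n=1$, whence $PS=0$; since $S$ is flat over the local ring $R$, this gives $S=0$ when $P\ne 0$. Thus $I^{(\mathbb{N})}$ is injective, $R$ is Noetherian (\cite[Theorem 25.3]{AnFu92}) and therefore Artinian, and every module is cotorsion, hence pure-injective. Note that this also bypasses the perfectness and $\dim P/P^2$ steps you planned to reuse.

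For (2), your local-to-global gluing over the clopen basis can be made to work, but the paper's argument is shorter and different: it shows $E/U$ is FP-injective for every FP-injective $E$ and pure submodule $U$, reducing via Proposition~\ref{P:redu} to $E=\mathrm{E}(U)$, where $E/U$ is flat by Theorem~\ref{T:Xu} and hence locally free and injective over each pure-semisimple $R_P$; coherence then follows from \cite[35.9]{Wis91}.
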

\begin{proof} 
$(1)$.  For any maximal ideal $P$, each cotorsion $R_P$-module is pure-injective over $R_P$. So, we may assume that $R$ is local and $P$ is its maximal ideal. Now we do as in the beginning of the proof of Theorem \ref{T:locNoet} with the same notations. Thus $E'=aE$ is the pure-injective hull of $M'=aM$. It follows that $aS$ is flat over $R$. Since $P$ is the sole prime ideal of $R$ by Proposition \ref{P:primemax}, $a$ is nilpotent. Let $n>0$ be the smallest integer satisfying $a^{2n}S=0$. Since $a^nS$ is flat, we have $S[a^n]$ is a pure submodule of $S$. For each $s\in S$, $a^ns\in S[a^n]$ and there exists $x\in S[a^n]$ such that $a^ns=a^nx=0$. So, $a^nS=0$. It is easy to see that necessarily $n=1$ and $aS=0$. From $PS=0$, $S$ flat and $R$ local ring, we deduce that $S=0$ if $P\ne 0$. It follows that $R$ is Artinian. Hence each $R$-module is cotorsion. We conclude that each $R$-module is pure-injective and $R$ is pure-semisimple.

$(2)$. We shall prove that $E/U$ is FP-injective for any FP-injective module $E$ and any pure submodule $U$ of $E$. By Proposition \ref{P:redu} we may assume that $E$ is the injective hull of $U$. So, $E\cong\mathcal{E}(U)$. By Theorem \ref{T:Xu} $E/U$ is flat. Then, for each maximal ideal $P$, $(E/U)_P$ is flat, hence free and injective since $R_P$ is pure-semisimple. We conclude that $E/U$ is FP-injective and $R$ is coherent by \cite[35.9]{Wis91}. 
\end{proof}

\begin{proposition}\label{P:locallyPS}
Let $R$ be a commutative ring whose prime ideals are  maximal. Let $X$ be the set of all maximal ideals $P$ such that $PR_P=0$. We denote by $A$,  the kernel of the naturel map $R\rightarrow \prod_{P\in X}R_P$. If $R$ is P-coherent then, $A$ is a pure submodule of $R$ and $X=V(A)$.
\end{proposition}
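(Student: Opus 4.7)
My plan is to prove that $A$ is pure by constructing, for each $a \in A$, an idempotent $e \in A$ with $ea = a$, and then to deduce $V(A)=X$ by a contradiction argument that reuses this construction.

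For purity I would use that $R$ is P-coherent to write $\mathrm{ann}(a) = Rs_1 + \cdots + Rs_n$. The proof of Proposition \ref{P:pireg}(3) shows that $\mathrm{Spec}\ R$ has a base of clopen subsets (because $R/N$ is von Neumann regular, $N$ the nilradical), so for each $s_i$ there is an idempotent $e_i \in R$ with $D(e_i) = D(s_i)$. I set $f = 1 - \prod_i(1-e_i)$, which is idempotent and generates $\sum_i Re_i$; hence $V(f) = \bigcap_i V(s_i) = V(\mathrm{ann}(a))$, and being an idempotent lying in $\sqrt{\mathrm{ann}(a)}$, $f \in \mathrm{ann}(a)$. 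Because $a \in A$, for every $Q \in X$ there is $s \notin Q$ with $sa = 0$, so $\mathrm{ann}(a) \not\subseteq Q$, i.e., $X \subseteq D(\mathrm{ann}(a)) = D(f)$. For $Q \in X$, Proposition \ref{P:pireg}(2) gives $R_Q = R/Q$, a field, in which the nonzero idempotent $f/1$ must equal $1$; consequently $e := 1-f \in A$ and $ea = a - fa = a$, so $A$ is pure.

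For $V(A) = X$, the inclusion $X \subseteq V(A)$ is immediate from $A \subseteq 0_Q = Q$ for $Q \in X$. Conversely, I would suppose $P \in V(A) \setminus X$ and derive a contradiction. Because all primes of $R$ are maximal, $R_P$ has a unique prime, so $\mathrm{nilrad}(R_P) = PR_P$; combined with the commutation of nilradical and localization, this gives $PR_P = NR_P$. Since $P \notin X$, $PR_P \neq 0$, hence $NR_P \neq 0$, so there exists $n \in N$ with $n/1 \neq 0$ in $R_P$, equivalently $\mathrm{ann}(n) \subseteq P$. For every $Q \in X$, $R_Q$ is a field and $n$ is nilpotent, so $n/1 = 0$ in $R_Q$, i.e., $n \in \bigcap_{Q \in X} 0_Q = A$. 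Applying the purity step, pick an idempotent $e \in A$ with $en = n$. Then $e \in A \subseteq P$, and since $R_P$ is local its only idempotents are $0$ and $1$; $e \in P$ forces $e/1 = 0$ in $R_P$, whence $n/1 = (en)/1 = 0$ in $R_P$, contradicting $\mathrm{ann}(n) \subseteq P$.

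The delicate point I expect is the passage from $P \notin X$ to a nilpotent witness $n \in N$ with $\mathrm{ann}(n) \subseteq P$; this rests on the identity $PR_P = NR_P$, valid because every prime of $R$ is maximal, so that $R_P$ has a single prime whose extension is the nilradical. P-coherence enters only in the purity half of the argument, to ensure that $\mathrm{ann}(a)$ is finitely generated so that the clopen $D(\mathrm{ann}(a))$ admits a single idempotent representative $f$.
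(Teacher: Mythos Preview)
Your proof is correct, and it takes a genuinely different route from the paper's.

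\textbf{Purity of $A$.} The paper argues conceptually: $R/A$ embeds in $\prod_{P\in X}R_P$; since $R/A$ is reduced with every prime maximal, it is von Neumann regular, hence this embedding is pure (as an embedding of $R/A$-modules, and therefore of $R$-modules). The product is P-flat over $R$ by P-coherence, so $R/A$ is P-flat; being cyclic, it is flat, and $A$ is pure. Your argument is instead constructive: from P-coherence you get $\mathrm{ann}(a)$ finitely generated, use the clopen basis of $\mathrm{Spec}\,R$ to produce an idempotent $f$ with $V(f)=V(\mathrm{ann}(a))$, and then show $e=1-f\in A$ with $ea=a$. This is more explicit and avoids the detour through P-flatness of products and the ``cyclic P-flat $\Rightarrow$ flat'' step.

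\textbf{The equality $V(A)=X$.} The paper's argument is shorter: once $A$ is pure, $P\in V(A)$ gives $A_P=0$; since the nilradical $J$ satisfies $J\subseteq A$ (every nilpotent dies in each field $R_Q$, $Q\in X$), one gets $PR_P=J_P\subseteq A_P=0$, so $P\in X$. Your contradiction argument recovers the same conclusion by picking a nilpotent witness $n\in A$ with $n/1\neq 0$ in $R_P$ and killing it with the idempotent $e$ you built; this works, but it re-invokes the P-coherence construction for $n$, so your closing remark that ``P-coherence enters only in the purity half'' is slightly misleading---it is used again, just through the same mechanism.

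In short: the paper's proof is more conceptual and economical; yours is more elementary and self-contained, trading the flatness machinery for an explicit idempotent lift. Both are valid.
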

\begin{proof}
Since $R/A$ is a subring of a product of fields, $R/A$ is reduced. From the fact that each prime ideal is maximal we deduce that $R/A$ is von Neumann regular. Thus $R/A$ is a pure submodule of $\prod_{P\in X}R_P$ which is P-flat because $R$ is P-coherent. It follows that $A$ is a pure ideal. Since $A_P=0$ for each $P\in X$, we have $X\subseteq V(A)$. Let $P\in V(A)$. Then $A_P=0$ because $A$ is pure. It is obvious that $J\subseteq A$ where $J$ is the Jacobson radical of $R$. Since $J$ is also the nilradical of $R$, we have $PR_P=JR_P=0$. Hence $P\in X$.
\end{proof}

\begin{theorem}\label{T:main}
Let $R$ be a commutative ring. The following conditions are equi\-valent:
\begin{enumerate}
\item each cotorsion $R$-module is pure-injective;
\item $R$ is P-coherent and each pure-essential extension of $R$-modules is essential;
\item $R$ is P-coherent and each RD-essential extension of $R$-modules is essential;
\item any $R$-module $M$ is pure-injective if and only if $\mathrm{Ext}_R^1(C,M)=0$ for each cyclic flat $R$-module $C$;
\item there exists a family $\mathfrak{E}$ of orthogonal irreducible idempotents of $R$ satisfying the following conditions:
\begin{enumerate}
\item $R/R(1-e)$ is a pure-semisimple ring but not a field, for each $e\in\mathfrak{E}$;
\item $R/A$ is a von Neumann regular ring where $A=\oplus_{e\in\mathfrak{E}}Re$.
\end{enumerate}
\end{enumerate}
Moreover, when these conditions hold, the following are satisfied:
\begin{itemize}
\item[(6)] $\mathrm{PE}(M)/M$ is flat, FP-injective and regular for each $R$-module $M$.
\item[(7)] each Warfield cotorsion module is RD-injective.
\end{itemize}
\end{theorem}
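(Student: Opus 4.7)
The plan is to establish a cycle of implications: $(5)$ implies each of $(1)$, $(2)$, $(3)$, $(4)$, $(6)$, $(7)$, and conversely each of $(1)$, $(2)$, $(3)$, $(4)$ implies $(5)$; statements $(6)$ and $(7)$ then follow automatically once $(5)$ is in hand.

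For $(5)\Rightarrow(1),(4),(6)$ apply Proposition~\ref{P:noabelien} parts (1), (3), (4) respectively to the family $\mathfrak{E}$ furnished by $(5)$. Since a pure-semisimple ring is a fortiori RD-semisimple, $\mathfrak{E}$ also satisfies the hypotheses of Proposition~\ref{P:noabelien1}, yielding $(7)$ and the essentiality halves of $(2)$ and $(3)$ by Propositions~\ref{P:noabelien}(2) and~\ref{P:noabelien1}(2). For the P-coherence halves of $(2)$ and $(3)$, use the chain $(5)\Rightarrow(1)$ (already shown) and then Theorem~\ref{T:max}(2) to conclude $R$ is coherent, hence P-coherent. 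The implication $(4)\Rightarrow(1)$ is immediate: any cotorsion $M$ satisfies $\mathrm{Ext}_R^1(C,M)=0$ for every flat $C$, in particular every cyclic flat $C$, so $(4)$ delivers pure-injectivity.

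For $(1)\Rightarrow(5)$: by Proposition~\ref{P:primemax} every prime of $R$ is maximal, and by Theorem~\ref{T:max} each $R_P$ is pure-semisimple (hence Artinian local) and $R$ is coherent. Put $X=\{P\in\mathrm{Spec}\,R : R_P\text{ is a field}\}$ and let $A$ be the kernel of $R\to\prod_{P\in X}R_P$. By Proposition~\ref{P:locallyPS}, $A$ is a pure ideal with $V(A)=X$, and $(R/A)_P=R_P$ is a field for every $P\in X$, so $R/A$ is von Neumann regular. For each $P\in D(A)$ the ring $R_P=R/0_P$ is Artinian local with nontrivial maximal ideal; one shows that $\{P\}$ is isolated in $\mathrm{Spec}\,R$, producing a central idempotent $e_P\in R$ with $D(e_P)=\{P\}$ and $R/R(1-e_P)\cong R_P$ pure-semisimple and not a field. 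The family $\mathfrak{E}=\{e_P : P\in D(A)\}$ is then orthogonal (distinct $e_P$'s have disjoint support) and irreducible (each $Re_P$ is local), and $\oplus_{e\in\mathfrak{E}}Re=A$ by equality of pure ideals with common support (via Proposition~\ref{P:pireg}(1)), giving $(5)(a)(b)$. The implications $(2)\Rightarrow(5)$ and $(3)\Rightarrow(5)$ follow the same pattern, substituting Theorem~\ref{T:locNoet} (resp.\ Theorem~\ref{T:locNoetRD}) for Theorem~\ref{T:max} to obtain local pure-semisimplicity, and Propositions~\ref{P:semi-compact} and~\ref{P:semiCompPurInj} (resp.\ the opening part of the proof of Theorem~\ref{T:locNoetRD}) to force all primes to be maximal.

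The main technical obstacle is the isolated-points step in $(1), (2), (3)\Rightarrow(5)$: given $P\in D(A)$, construct an idempotent $e_P\in R$ with $D(e_P)=\{P\}$. By Proposition~\ref{P:pireg}(1) applied to the closed singleton $\{P\}$, the ideal $0_P$ is pure and $V(0_P)=\{P\}$; since $R_P=R/0_P$ is Artinian and $R$ is (P-)coherent, one argues that $0_P$ is finitely generated, after which Proposition~\ref{P:pireg}(3) yields that $0_P$ is generated by a single idempotent $1-e_P$, delivering the required clopen point. This structural fact is the crux that allows the decomposition of $A$ along $\mathfrak{E}$, and hence the whole equivalence, to go through.
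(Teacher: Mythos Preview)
Your overall architecture is right, and the easy implications $(4)\Rightarrow(1)$ and $(5)\Rightarrow(1),(2),(3),(4),(6),(7)$ via Propositions~\ref{P:noabelien} and~\ref{P:noabelien1} match the paper. The problem is the isolated-points step, which you correctly flag as ``the crux'' but do not actually carry out.

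Your proposed argument is: $R/0_P\cong R_P$ is Artinian and $R$ is (P-)coherent, therefore $0_P$ is finitely generated. This inference is unjustified. Coherence only says that \emph{finitely generated} ideals are finitely presented; it gives no leverage toward finite generation of an arbitrary pure ideal. Artinianness of the quotient $R/0_P$ likewise says nothing about the size of $0_P$ inside $R$. Indeed, ``locally pure-semisimple and coherent'' is a strictly weaker hypothesis than $(1)$ (the abstract itself announces that $\mathcal{C}$ is strictly contained in the class of locally pure-semisimple rings), so an argument for $(1)\Rightarrow(5)$ that uses only those two consequences cannot succeed in general.

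The paper does not attempt to isolate a single $P$. Instead it picks an arbitrary idempotent $e\in A$ (such idempotents generate $A$ by Proposition~\ref{P:pireg}(3)), passes to $R'=R/R(1-e)$, and shows that $R'$ is \emph{Artinian}, whence $e$ splits as a finite sum of orthogonal irreducible idempotents. The Artinianness of $R'$ is obtained by a module-theoretic argument that genuinely uses the hypothesis at hand: under $(1)$ one reruns the computation from the proof of Theorem~\ref{T:max} to show that $S=E/M$ (with $M$ a countable sum of indecomposable injectives over $R'$) is annihilated by every nilpotent and is regular, hence $S_P$ is simultaneously flat and semisimple, forcing $S=0$; then $M=\mathrm{E}(M)$ and \cite[Theorem~25.3]{AnFu92} gives $R'$ Noetherian. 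Under $(2)$ the paper instead invokes Proposition~\ref{P:eregular} to get that every FP-injective $R'$-module is injective, again forcing $R'$ Noetherian. In both cases the flatness/regularity of the relevant cokernel comes from Theorem~\ref{T:Xu} or Proposition~\ref{P:eregular}, i.e.\ from the full strength of $(1)$ or $(2)$, not merely from local pure-semisimplicity plus coherence. Your sketch needs to incorporate one of these arguments (or an equivalent one) at exactly this point.
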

\begin{proof}
It is obvious that $(4)\Rightarrow (1)$. If $R$ satisfies condition $(2)$
or $(3)$ then,  by Theorems  
\ref{T:locNoet} or   \ref{T:locNoetRD}, $R$ is arithmetical. It follows that $(2)\Leftrightarrow (3)$.

$(1)\Rightarrow (5)$. By Theorem \ref{T:max}, $R$ is coherent and $R_P$ is pure-semisimple for each maximal ideal $P$. Let $A$ be the pure ideal of $R$ defined in Proposition \ref{P:locallyPS}. By Proposition \ref{P:pireg} $A$ is generated by its idempotents. Let $e=e^2\in A$. Then $R'=R/R(1-e)$ satisfies $(1)$. Let $I=\oplus_{P\in D(e)}\mathrm{E}(R/P)$, $M=I^{(\mathbb{N})}$, $E=\mathrm{E}_{R'}(M)$ and $S=E/M$. For each nilpotent element $a$ of $R'$, we do as in the proof of Theorem \ref{T:max} to show that $aS=0$. Since $R'$ is von Neumann regular modulo its nilradical, $S$ is regular. Thus, for each $P\in D(e)$, $S_P$ is flat and it is semisimple by Proposition \ref{P:regular}. Since $PR_P\ne 0$, it follows that $M_P=E_P$ for each $P\in D(e)$, and $M=E$. By \cite[Theorem 25.3]{AnFu92} $R'$ is Artinian. So, $R'$ is a finite product of local rings. We deduce that $e$ is a sum of orthogonal irreductible idempotents. So, \[\mathfrak{E}=\{e_P\mid P\in D(A)\ \mathrm{and}\ D(e_P)=\{P\}\}.\] 

$(2)\Rightarrow (5)$. Since $R$ is locally pure-semisimple by  Theorem \ref{T:locNoet} and coherent we do as above to define the pure ideal $A$. Then, by using Proposition \ref{P:eregular}, we show that each FP-injective $R'$-module is injective. So, $R'$ is Noetherian, and Artinian because each prime ideal is maximal. We end as above. 

By Proposition \ref{P:noabelien}, $(5)\Rightarrow (4), (2)$ and $(6)$ and by Proposition \ref{P:noabelien1}, $(5)\Rightarrow (7)$.
\end{proof}

\section{Baer's criterion}

The following two propositions are similar to Propositions \ref{P:noabelien} and \ref{P:noabelien1} and can be proven in the same way. They allow us to give non trivial examples of rings for which any flat-essential (P-flat-essential) extension of left modules is essential.

\begin{proposition}\label{P:main1}
Let $R$ be a  ring. Assume there exists a family $\mathfrak{E}$ of orthogonal central idempotents of $R$ satisfying the following conditions:
\begin{itemize}
\item[(a)] $R/R(1-e)$ is left perfect for each $e\in\mathfrak{E}$;
\item[(b)] $R/A$ is a von Neumann regular ring where $A=\oplus_{e\in\mathfrak{E}}Re$.
\end{itemize}
Then:
\begin{enumerate}
\item each flat-essential extension of left $R$-modules is essential;
\item any left $R$-module $M$ is cotorsion if and only if $\mathrm{Ext}_R^1(C,M)=0$ for each cyclic flat left $R$-module $C$;
\item $\mathcal{E}(M)/M$ is flat, FP-injective and regular for each left $R$-module $M$.
\end{enumerate}
\end{proposition}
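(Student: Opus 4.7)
I would prove the three assertions by following the proof of Proposition~\ref{P:noabelien} line by line, changing only ``left pure-semisimple'' to ``left perfect'' and the matching conclusion ``pure-injective'' to ``cotorsion''. The one genuinely new ingredient, which is the main obstacle, is a \emph{transfer lemma}: for each $e\in\mathfrak{E}$, every $eR$-module $X$ (in particular $X=eM$) is cotorsion over $R$. Since $e$ is a central idempotent we have $R=Re\oplus R(1-e)$, so $eR\cong R/R(1-e)$ is a flat $R$-algebra; the standard change-of-rings isomorphism then gives $\mathrm{Ext}^1_R(F,X)\cong\mathrm{Ext}^1_{eR}(eF,X)$ for every flat $R$-module $F$, and the right-hand side vanishes because $eR$ is left perfect (so every flat $eR$-module is projective). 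Consequently $\mathrm{Hom}_R(A,M)\cong\prod_{e\in\mathfrak{E}}eM$ is cotorsion over $R$.

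Statement (2) is then obtained as in Proposition~\ref{P:noabelien}(3). Assuming $\mathrm{Ext}^1_R(R/B,M)=0$ for every pure left ideal $B$, the pure ideal $A$ yields the exact sequence
\[0\to\mathrm{Hom}_R(R/A,M)\to M\to\mathrm{Hom}_R(A,M)\to 0.\]
Ideals of the von Neumann regular ring $R/A$ correspond to pure ideals of $R$ containing $A$, and the change-of-rings identification $\mathrm{Ext}^1_{R/A}(R/B,\mathrm{Hom}_R(R/A,M))\cong\mathrm{Ext}^1_R(R/B,M)=0$ combined with Baer's criterion over $R/A$ shows that $\mathrm{Hom}_R(R/A,M)$ is injective over $R/A$, hence also over $R$ since $R/A$ is flat. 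Thus $M$ sits in a short exact sequence of cotorsion modules, so $M$ is cotorsion. The converse is immediate because cyclic flat modules are exactly the $R/B$ with $B$ a pure ideal.

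For (1) and (3), I would replay the construction of Proposition~\ref{P:noabelien}(2) verbatim: set $N=\mathrm{Hom}_R(R/A,M)$, $E=\mathrm{E}(N)$, $L=\mathrm{Hom}_R(A,M)$ and define $\phi:M\to E\oplus L$ by $\phi(m)=(f(m),g(m))$ where $f$ extends the inclusion $N\hookrightarrow E$ and $g$ is canonical. The same verifications show $\phi$ is injective and essential, with $\mathrm{coker}(\phi)$ an $R/A$-module that is flat over $R$. Now $E\oplus L$ is cotorsion (injective $\oplus$ cotorsion by the transfer lemma), and $\phi$ exhibits it as a flat extension of $M$, so by \cite[Theorem~3.4.5]{Xu96} the cotorsion envelope $\mathcal{E}(M)$ embeds in $E\oplus L$ over $\phi(M)$; essentiality of $\phi$ forces $M\hookrightarrow\mathcal{E}(M)$ to be essential, and since every flat-essential extension of $M$ embeds in $\mathcal{E}(M)$, assertion (1) follows. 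For (3), $\mathcal{E}(M)/M$ is a submodule of the $R/A$-module $(E\oplus L)/\phi(M)$, hence itself an $R/A$-module; because $R/A$ is von Neumann regular and flat over $R$, every $R/A$-module is simultaneously flat, FP-injective, and regular as an $R$-module.
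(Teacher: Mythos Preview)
Your approach is exactly what the paper intends: it states only that Propositions~\ref{P:main1} and~\ref{P:main2} ``are similar to Propositions~\ref{P:noabelien} and~\ref{P:noabelien1} and can be proven in the same way,'' and your proposal does precisely this substitution, correctly isolating the one new ingredient (that each $eM$ is cotorsion over $R$ because $eR$ is left perfect).

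One small point deserves a sentence of justification. Your deduction of (1) relies on the claim that ``every flat-essential extension of $M$ embeds in $\mathcal{E}(M)$.'' Unlike the pure-injective hull case, this is \emph{not} a general property of cotorsion envelopes: given a flat-essential extension $M\hookrightarrow B$, one can extend $\phi$ to $\psi:B\to E\oplus L$, but to conclude $\ker\psi=0$ one needs that $(B/\ker\psi)/M$, a submodule of $(E\oplus L)/\phi(M)$, is again flat. This is exactly where the fact you establish in (3) enters --- $(E\oplus L)/\phi(M)$ is an $R/A$-module, hence regular, so all its submodules are pure and therefore flat over $R$. Once you note this, flat-essentiality of $M\hookrightarrow B$ forces $\ker\psi=0$, and essentiality of $\phi$ finishes the argument. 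In other words, (3) should be established (or at least the $R/A$-module structure of $\mathrm{coker}(\phi)$) before invoking it to prove (1); your write-up has the right pieces but leans on \cite[Theorem~3.4.5]{Xu96} for a step it does not quite supply.
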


We say that a ring $R$ is left {\bf strongly perfect} if each P-flat left $R$-module is projective. Clearly every left strongly perfect ring is perfect, but \cite[Proposition 4.8]{Cou11} shows that there exist Artinian commutative rings which are not strongly perfect. And \cite[Example 3.2]{BeCoSh14} is a strongly perfect ring by \cite[Theorem 4.11]{Cou11} and it is non-Artinian if $\Lambda$ is not finite.

\begin{proposition}\label{P:main2}
Let $R$ be a  ring. Assume there exists a family $\mathfrak{E}$ of orthogonal central idempotents of $R$ satisfying the following conditions:
\begin{itemize}
\item[(a)] $R/R(1-e)$ is left strongly perfect for each $e\in\mathfrak{E}$;
\item[(b)] $R/A$ is a von Neumann regular ring where $A=\oplus_{e\in\mathfrak{E}}Re$.
\end{itemize}
Then:
\begin{enumerate}
\item each P-flat-essential extension of left $R$-modules is essential;
\item any left $R$-module $M$ is Warfield cotorsion if and only if $\mathrm{Ext}_R^1(C,M)=0$ for each cyclic flat left $R$-module $C$;
\item $\mathcal{E}_W(M)/M$ is flat, FP-injective and regular for each left $R$-module $M$.
\end{enumerate}
\end{proposition}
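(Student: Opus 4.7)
The plan is to replay the proof of Proposition~\ref{P:noabelien} with the substitutions pure-semisimple~$\leadsto$~strongly perfect, pure-injective~$\leadsto$~Warfield cotorsion, pure ideal~$\leadsto$~cyclic flat module, and $\mathrm{PE}(M)\leadsto\mathcal{E}_W(M)$. The enabling observation is that if $R/R(1-e)$ is strongly perfect, then every P-flat $R/R(1-e)$-module is projective, so every $R/R(1-e)$-module is Warfield cotorsion; this plays the role in \ref{P:noabelien} of ``every module over a pure-semisimple ring is pure-injective''.

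For (2), I would assume $\mathrm{Ext}^1_R(C,M)=0$ for every cyclic flat $R$-module $C$ and proceed exactly as in parts~(3)/(1) of \ref{P:noabelien}. Purity of $A$ yields an exact sequence
\[0\to\mathrm{Hom}_R(R/A,M)\to M\to\mathrm{Hom}_R(A,M)\to 0.\]
For each ideal $B\supseteq A$ with $B/A$ cyclic in the von Neumann regular ring $R/A$, the module $R/B$ is cyclic flat over $R$, so $\mathrm{Ext}^1_R(R/B,M)=0$; change of rings and Baer's criterion then give that $\mathrm{Hom}_R(R/A,M)$ is injective over $R/A$, and flatness of $R/A$ over $R$ lifts this to injectivity over $R$. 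Hence the sequence splits. The identification $\mathrm{Hom}_R(A,M)\cong\prod_{e\in\mathfrak{E}}eM$ places each $eM$ in the Warfield-cotorsion category over $R/R(1-e)$; descent to $R$ (treated below) together with the closure of Warfield cotorsion under direct products then shows $\mathrm{Hom}_R(A,M)$, and thus $M$, is Warfield cotorsion. The converse in (2) is immediate.

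For (1) and (3) I would copy step~(2) of \ref{P:noabelien} almost verbatim: set $N=\mathrm{Hom}_R(R/A,M)$, $E=\mathrm{E}(N)$, $L=\mathrm{Hom}_R(A,M)$, and define $\phi\colon M\to E\oplus L$ by $\phi(m)=(f(m),g(m))$ with $f$ extending $N\hookrightarrow E$ and $g$ canonical. The same verifications carry over: $E\oplus L$ is Warfield cotorsion, $\phi$ is an essential monomorphism, and $\mathrm{coker}(\phi)$ is an $R/A$-module which is flat, FP-injective and regular as an $R$-module. Since this cokernel is in particular P-flat, any P-flat-essential extension $M\hookrightarrow N'$ extends to a map $N'\to E\oplus L$ that must be injective by the P-flat-essential property; hence $N'$ embeds into $E\oplus L$ and inherits the essentiality of $M\hookrightarrow E\oplus L$, proving (1). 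Applied to $N'=\mathcal{E}_W(M)$, one obtains $\mathcal{E}_W(M)\subseteq E\oplus L$, so $\mathcal{E}_W(M)/M$ is an $R$-submodule of the $R/A$-module $\mathrm{coker}(\phi)$ and inherits being flat, FP-injective and regular, which is (3).

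The only step requiring genuine attention is the descent of Warfield cotorsion along $R\twoheadrightarrow R/R(1-e)$: one needs $\mathrm{Ext}^1_R(F,eM)\cong\mathrm{Ext}^1_{eR}(eF,eM)$ together with the fact that $eF$ is P-flat over $eR$ whenever $F$ is P-flat over $R$. The former follows from the flatness of $eR$ as an $R$-module (being a direct summand); the latter reduces, upon identifying $eR/s(eR)$ with $R/(sR+(1-e)R)$ and noting that $sR+(1-e)R=(s+(1-e))R$ is principal, to a single instance of the P-flatness of $F$ over $R$. Once this descent is in hand, all remaining arguments copy those of \ref{P:noabelien} and \ref{P:noabelien1} without change.
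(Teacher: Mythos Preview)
Your plan is exactly what the paper does: it gives no separate proof of this proposition, merely stating that it ``can be proven in the same way'' as Proposition~\ref{P:noabelien} (and \ref{P:noabelien1}), which is precisely the replay-with-substitutions you outline.

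One slip to fix in part~(2): in your Baer step you restrict to ideals $B\supseteq A$ with $B/A$ \emph{cyclic} in $R/A$. Over a von Neumann regular ring every module is FP-injective, so vanishing of $\mathrm{Ext}^1$ against quotients by principal ideals is automatic and does not yield injectivity. Simply drop the restriction: for \emph{every} left ideal $B\supseteq A$, the module $R/B$ is cyclic (being a quotient of $R$) and flat over $R$ (it is an $R/A$-module, hence flat over the von Neumann regular ring $R/A$, and $R/A$ is flat over $R$). Thus your hypothesis gives $\mathrm{Ext}^1_R(R/B,M)=0$ for all such $B$, and the usual Baer criterion then makes $\mathrm{Hom}_R(R/A,M)$ injective over $R/A$, exactly as in the proof of Proposition~\ref{P:noabelien}. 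With this correction the rest of your argument goes through unchanged.
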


Now we end by giving a description of commutative rings satisfying the Baer's criterion for (Warfield) cotorsion modules.

\begin{theorem}
\label{T:baer}
Let $R$ be commutative ring. Then the following conditions are equivalent:
\begin{enumerate}
\item $R_P$ is perfect for each maximal ideal $P$;
\item any $R$-module $M$ is cotorsion if and only if $\mathrm{Ext}_R^1(C,M)=0$ for each cyclic flat $R$-module $C$.
\end{enumerate}
\end{theorem}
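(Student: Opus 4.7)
The plan is to prove the two implications separately: localization-based for $(2) \Rightarrow (1)$ and cyclic-flat filtration-based for $(1) \Rightarrow (2)$.

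For $(2) \Rightarrow (1)$, I would descend condition $(2)$ to each localization $R_P$ and deduce from it that $R_P$ is perfect. Given an $R_P$-module $M$ satisfying $\mathrm{Ext}^1_{R_P}(C, M) = 0$ for every cyclic flat $R_P$-module $C$, the identification $\mathrm{Ext}^1_R(R/A, M) \cong \mathrm{Ext}^1_{R_P}(R_P/AR_P, M)$ valid for any $R_P$-module $M$, combined with the purity of $AR_P$ in $R_P$ for each pure ideal $A \subseteq R$, gives the hypothesis of $(2)$ over $R$; $(2)$ then forces $M$ to be $R$-cotorsion, hence $R_P$-cotorsion. Thus $(2)$ descends to $R_P$. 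Now over the commutative local ring $R_P$, every finitely generated flat module is free, so cyclic flat $R_P$-modules are only $R_P$ and $0$; equivalently, the only pure ideals of $R_P$ are $0$ and $R_P$. The Baer-like hypothesis at $R_P$ is therefore vacuous, so $(2)$ at $R_P$ says every $R_P$-module is cotorsion. This is equivalent to every flat $R_P$-module being projective, which is the definition of $R_P$ being perfect.

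For $(1) \Rightarrow (2)$, suppose $R_P$ is perfect for each maximal $P$. Since perfect rings are $0$-dimensional, every prime of $R$ is maximal, and Proposition \ref{P:pireg} yields that pure ideals of $R$ are generated by idempotents and that $\mathrm{Spec}\ R$ is a Stone space. Given $M$ with $\mathrm{Ext}^1_R(R/A, M) = 0$ for every pure ideal $A$, the target is $\mathrm{Ext}^1_R(F, M) = 0$ for every flat $R$-module $F$. My plan is to exhibit, for each flat $F$, a continuous well-ordered filtration $0 = F_0 \subseteq F_1 \subseteq \cdots \subseteq F_\lambda = F$ whose successive quotients $F_{\alpha+1}/F_\alpha$ are cyclic flat modules $R/A_\alpha$. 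Eklof's lemma, together with the Baer hypothesis $\mathrm{Ext}^1_R(R/A_\alpha, M) = 0$, would then yield $\mathrm{Ext}^1_R(F, M) = 0$, so that $M$ is cotorsion.

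The main obstacle is constructing the cyclic-flat filtration of an arbitrary flat $F$. This construction must exploit both the Stone-space structure of $\mathrm{Spec}\ R$ (so that pure ideals correspond to clopen subsets of the spectrum and their sums) and the freeness of each stalk $F_P$ over the perfect local ring $R_P$. A natural recursive strategy, at each successor stage $\alpha$, picks $x \in F \setminus F_\alpha$, uses the clopen structure of $\mathrm{Spec}\ R$ to identify a pure ideal $A_\alpha$ for which $R/A_\alpha$ embeds purely in $F/F_\alpha$ via $1 \mapsto \bar{x}$, and sets $F_{\alpha+1}$ accordingly; at limits one takes unions. Verifying purity at each successor step and exhaustion of $F$ is the technical heart of this direction.
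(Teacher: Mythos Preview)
Your proposal is correct and takes essentially the same route as the paper: the $(2)\Rightarrow(1)$ argument is identical, and for $(1)\Rightarrow(2)$ the paper runs the equivalent direct Zorn/extension argument (maximizing over pure submodules $N\supseteq K$ of a free module together with extensions of the given map) in place of your Eklof-lemma packaging. The successor step you flag as the technical heart is made explicit there: choose a maximal ideal $P$ with $(F/F_\alpha)_P\ne 0$, use perfectness of $R_P$ to pick $x\in F\setminus F_\alpha$ whose image in $(F/F_\alpha)_P$ is a free generator, observe $(F_\alpha:x)=0_P$, so that $F_{\alpha+1}/F_\alpha\cong R/0_P$ is cyclic flat, and check purity of $F_{\alpha+1}$ in $F$ locally using Proposition~\ref{P:pireg}.
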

\begin{proof}
$(2)\Rightarrow (1)$. Let $P$ be a maximal ideal and $M$ an $R_P$-module. If $C$ is a nonzero cyclic flat $R$-module, then $C_P$ is free over $R_P$. It follows that $\mathrm{Ext}^1_R(C,M)\cong\mathrm{Ext}^1_{R_P}(C_P,M)=0$. So, $M$ is cotorsion over $R$ and $R_P$. Since each $R_P$-module is cotorsion, $R_P$ is perfect.

$(1)\Rightarrow (2)$. Let $M$ be an $R$-module satisfying $\mathrm{Ext}^1_R(C,M)=0$ for any flat cyclic $R$-module $C$. Let $F$ be a free $R$-module, $K$ a pure submodule of $F$ and $\alpha:K\rightarrow M$ a homomorphism. We must prove that $\alpha$ extends to $F$. We consider the family $\mathcal{F}=\{(N,\beta)\}$ where $N$ is a pure submodule of $F$ containing $K$ and $\beta$ an extension of $\alpha$ to $N$. We consider the following order on $\mathcal{F}$: $(N,\beta)\leq (L,\gamma)$ if and only if $N\subseteq L$ and $\gamma\vert_N=\beta$. It is easy to see that we can apply Zorn Lemma to $\mathcal{F}$. So, let $(N,\beta)$ be a maximal element of $\mathcal{F}$. By way of contradiction suppose that $N\ne F$. Let $G=F/N$. There exists a maximal ideal $P$ such that $G_P\ne 0$. Since $R_P$ is perfect, $G_P$ is free over $R_P$. Thus there exists $x\in F\setminus N$ such that its image $y$ in $G_P$ verifies $(0:_{R_P}y)=0$. It follows that $(N:x)=0_P$ (see Proposition \ref{P:pireg}). Let $\delta:0_p\rightarrow M$ be the homomorphism defined by $\delta(a)=\beta(ax)$ for any $a\in 0_P$. Then $\delta$ extends to $R$. Now, let $\phi:N+Rx\rightarrow M$ be the homomorphism defined by $\phi(n+rx)=\beta(n)+\delta(r)$ for any $n\in N$ and $r\in R$. It is easy to check that $\phi$ is well defined. Let $H=N+Rx/N$. Then $H\cong R_P$. So, $H_P$ is a direct summand of $G_P$ and if $P'$ is another maximal ideal then $H_{P'}=0$. We successively deduce that $H$ is a pure submodule of $G$, $F/N+Rx$ is flat and $N+Rx$ is a pure submodule of $F$. This contradicts the maximality of $(N,\beta)$. Hence $N=F$ and $M$ is cotorsion. 
\end{proof}

It is easy to check that each P-flat cyclic left module is flat.

\begin{corollary}
\label{C:baer}
Let $R$ be commutative ring. Then the following conditions are equivalent:
\begin{enumerate}
\item $R_P$ is strongly perfect for each maximal ideal $P$;
\item any $R$-module $M$ is Warfield cotorsion if and only if $\mathrm{Ext}_R^1(C,M)=0$ for each cyclic flat $R$-module $C$.
\end{enumerate}
\end{corollary}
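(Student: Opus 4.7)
The plan is to mirror the proof of Theorem \ref{T:baer}, systematically replacing ``pure'' by ``RD-pure'', ``flat'' by ``P-flat'', and ``perfect'' by ``strongly perfect''. The key preliminary input is the remark stated just before the corollary: every cyclic P-flat module is flat. Consequently the Ext-vanishing hypothesis in (2), stated for cyclic flat modules, coincides with the vanishing of $\mathrm{Ext}^1_R(R/I,M)$ for every pure ideal $I$ (equivalently, for every cyclic P-flat module).

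For $(2)\Rightarrow (1)$, I fix a maximal ideal $P$ and any $R_P$-module $M$. For a cyclic flat $R$-module $C$, the localization $C_P$ is free over $R_P$, so $\mathrm{Ext}^1_R(C,M)\cong\mathrm{Ext}^1_{R_P}(C_P,M)=0$, hence by (2) $M$ is Warfield cotorsion (over $R$ and therefore over $R_P$). Every $R_P$-module being Warfield cotorsion then forces every P-flat $R_P$-module $F$ to be projective: take a short exact sequence $0\to K\to L\to F\to 0$ with $L$ free; then $K$ is Warfield cotorsion, so $\mathrm{Ext}^1_{R_P}(F,K)=0$ and the sequence splits. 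Thus $R_P$ is strongly perfect.

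For $(1)\Rightarrow (2)$, the ``only if'' direction is trivial since cyclic flat modules are P-flat. For the ``if'' direction, I first observe that since each $R_P$ is perfect, its maximal ideal $PR_P$ is T-nilpotent, hence nil, so $PR_P$ is the unique prime of $R_P$; therefore every prime of $R$ is maximal, and Proposition \ref{P:pireg} applies. Given $M$ satisfying the Ext vanishing, to prove $M$ is Warfield cotorsion it suffices, given a free $R$-module $F$ with an RD-pure submodule $K$ and a map $\alpha:K\to M$, to extend $\alpha$ to $F$. Apply Zorn's lemma to the collection of pairs $(N,\beta)$ with $K\subseteq N\subseteq F$, $N$ RD-pure in $F$, $\beta|_K=\alpha$, and let $(N,\beta)$ be maximal. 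Assume $N\ne F$; then $G=F/N$ is P-flat and $G_P\ne 0$ for some maximal ideal $P$. Strong perfectness of $R_P$ makes $G_P$ a free $R_P$-module, so one may pick $x\in F\setminus N$ whose image in $G_P$ is a basis element, and invoking Proposition \ref{P:pireg} arrange that $(N:x)=0_P$.

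Since $0_P$ is pure, $R/0_P$ is a cyclic flat $R$-module, so the hypothesis yields $\mathrm{Ext}^1_R(R/0_P,M)=0$; hence $\delta\colon 0_P\to M$, $a\mapsto\beta(ax)$, extends to a homomorphism $R\to M$ (still called $\delta$), and $\phi\colon N+Rx\to M$, $n+rx\mapsto\beta(n)+\delta(r)$, is well-defined (using $(N:x)=0_P$ together with the definition of $\delta$) and extends $\beta$. The quotient $H=(N+Rx)/N\cong R/0_P=R_P$ is P-flat and vanishes at every maximal ideal $Q\ne P$, which gives that $H$ is RD-pure in the P-flat module $G$; consequently $F/(N+Rx)\cong G/H$ is P-flat and $N+Rx$ is RD-pure in $F$, contradicting the maximality of $(N,\beta)$. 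The main obstacle is the careful choice of $x$ realising $(N:x)=0_P$ (carried out via the idempotent description of pure ideals supplied by Proposition \ref{P:pireg}) and the associated verification that $F/(N+Rx)$ is again P-flat, which is precisely what keeps the Zorn argument alive.
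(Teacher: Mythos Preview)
Your argument is correct, and for $(2)\Rightarrow(1)$ it matches what the paper implicitly intends (carry over the corresponding step of Theorem~\ref{T:baer}, replacing ``cotorsion'' by ``Warfield cotorsion'' and ``perfect'' by ``strongly perfect'').

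For $(1)\Rightarrow(2)$, however, you take a genuinely longer route than the paper. You rerun the entire Zorn's-lemma extension argument of Theorem~\ref{T:baer} in the RD/P-flat setting. The paper instead makes a single observation that collapses the corollary onto the theorem: if every $R_P$ is strongly perfect, then any P-flat $R$-module $G$ has $G_P$ P-flat, hence projective (free) over $R_P$ for every maximal $P$, so $G$ is flat. Thus P-flat and flat coincide globally, whence Warfield cotorsion and cotorsion coincide; since strongly perfect implies perfect, Theorem~\ref{T:baer} applies verbatim. This is why the result is stated as a corollary. Your approach has the virtue of being self-contained and of showing that the RD version of the Zorn argument goes through, but the paper's reduction is shorter and, more importantly, isolates the structural fact (P-flat $=$ flat under~(1)) that makes the two Baer-type criteria equivalent.
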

\begin{proof}
Let $G$ be a P-flat $R$-module. For each maximal ideal $P$ $G_P$ is P-flat. Since $R_P$ is strongly perfect, $G_P$ is free. Hence $G$ is flat. So, each cotorsion $R$-module is Warfield cotorsion.
\end{proof}

\section*{Acknowledgements}

This work was presented at the "second International Conference on Mathematics and Statistics (AUS-ICMS15)" held at American University of Sherjah, April 2-5, 2015. I thank again the organizers of this conference. I thank too the laboratory of mathematics Nicolas Oresme of the university of Caen Basse-Normandie which allowed me to participate to this conference.

%\bibliographystyle{plain}

%\bibliography{FCbibli}  

\end{document}